\numberwithin{equation}{section}
\theoremstyle{plain}
\newtheorem{thm}{Theorem}[section]
\newtheorem{prop}[thm]{Proposition}
\newtheorem{lem}[thm]{Lemma}
\newtheorem{defi}[thm]{Definition}
\theoremstyle{definition}
\theoremstyle{remark}
\newtheorem{rem}[thm]{Remark}
\newcommand{\R}{\mathbb{R}}
\newcommand\N{{\mathbb N}}
\newcommand\pref[1]{(\ref{#1})}
\let \eps\varepsilon
\newcommand\M{{\cal M}}
\DeclareMathOperator{\argmin}{argmin}
\DeclareMathOperator{\id}{id}
\newcommand\dive{\mathrm{div}}
\newcommand\supp{\mathrm{Supp}}
\def\<#1,#2>{\left<#1,#2\right>}
\newcommand\indi{{\mathbf{1}}}
\newcommand{\bn}{{\mbox{\boldmath$\nu$}}}
\newcommand{\bt}{{\mbox{\boldmath$\theta$}}}
\newcommand{\bxi}{{\mbox{\boldmath$\xi$}}}
\newcommand{\bg}{{\mbox{\boldmath$\gamma$}}}
\newcommand{\bv}{{\mbox{\boldmath$v$}}}
\newcommand{\bet}{{\mbox{\boldmath$\eta$}}}
\newcommand{\bw}{{\mbox{\boldmath$w$}}}
\newcommand{\bq}{{\mbox{\boldmath$q$}}}
\newcommand{\bid}{{\mbox{\boldmath$\id$}}}
\DeclareMathOperator{\sign}{sign}
\newcommand\hnu{{\hat{\nu}}}
\newcommand\bnu{{\overline \bn}}
\newcommand\ws{\stackrel{*}{\rightharpoonup}}
\def\PP{{\cal P}}
\title {Generalized solutions of a kinetic granular media equation by
a gradient  flow approach}
\author {Martial Agueh \thanks{\scriptsize  Department of Mathematics and statistics
University of Victoria, Victoria, BC, PO Box 3060 STN CSC
Victoria, BC, V8W 3R4, CANADA,  \texttt{agueh@math.uvic.ca}.} \;,\; Guillaume Carlier \thanks{\scriptsize CEREMADE, UMR CNRS 7534, Universit\'e Paris IX
Dauphine, Pl. de Lattre de Tassigny, 75775 Paris Cedex 16, FRANCE
\texttt{carlier@ceremade.dauphine.fr}.}}
\begin{document}

\maketitle

\begin{abstract}
We consider a one-dimensional kinetic model of granular media in the case where the interaction potential  is quadratic. Taking advantage of a simple first integral, we can use a reformulation (equivalent to the initial kinetic model for classical solutions) which allows measure solutions. This reformulation has a Wasserstein gradient flow structure (on a possibly infinite product of spaces of measures) for a convex energy which enables us to prove global in time well-posedness.
\end{abstract}

\textbf{Keywords:} Kinetic models of granular media, product Wasserstein space, gradient flows.\\

\textbf{AMS Subject Classifications:} 35Q70, 35D30, 35F25.


\section{Introduction}\label{sec-intro}

Kinetic models for granular media were initiated in the work of Benedetto, Caglioti and Pulvirenti \cite{bcp}, \cite{bcperr} who considered the following PDE
\begin{equation}\label{eqgene}
\partial_t f+ v\cdot \nabla_x f=\dive_v(f (\nabla W\star_v f)), \; (t,x,v)\in \R_+\times \R^d\times \R^d, \; f\vert_{t=0}=f_0,
\end{equation}
where $f_0$ is an integrable nonnegative function on the phase space and $W$ is a certain convex and radially symmetric potential capturing the (inelastic) collision rule between particles, and the convolution is in velocity only $(\nabla W\star_v f_t)(x,v)=\int_{\R^d} \nabla W(v-u)f_t(x,u) \mbox{d} u$   (so that there is no regularizing effect in the spatial variable). At least formally, \pref{eqgene} captures the limit as the number $N$ of particles tends to $+\infty$ of the second-order ODE system:
\begin{equation}\label{ode}
\dot{X}_i(t)=V_i(t), \; \dot{V}_i=-\frac{1}{N} \sum_{j\neq i} \nabla W(V_i(t)-V_j(t)) \delta_{X_i(t)-X_j(t)}, \; i=1, \cdots, N,
\end{equation} 
which describes the motion of $N$ particles of mass $\frac{1}{N}$ moving freely until collisions occur, and at collision times, there is some velocity  exchange  with a loss of kinetic energy depending on the form of the potential $W$. 


Surprisingly there are very few results on well-posedness for such equations. This is in contrast with the spatially homogeneous case (i.e. $f$ depending on $t$ and $v$ only) associated with (\ref{eqgene}) that has been very much studied (see  \cite{bcp,CMV1,CMV2,Laurent,BLR,CDFL} and the references therein) and  for which existence, uniqueness and long-time behavior are well understood. In fact, the spatially homogeneous version of (\ref{eqgene}) can be seen as the Wasserstein gradient flow of the interaction  energy associated to $W$, and then well-posedness results can be viewed as a consequence of the powerful theory of Wasserstein gradient flows (see \cite{AGS}).  For the full kinetic equation \pref{eqgene}, local existence and uniqueness of a classical solution was proved in one dimension in \cite{bcp} for the potential $W(v)=|v|^3/3\,$ (as observed in \cite{aci}, the arguments of \cite{bcp} extend to dimension $d$ and $W(v)=|v|^p/p$ provided $p>3-d$) when the initial datum $f_0$ is a non-negative $C^1\cap W^{1,\infty}(\R\times\R)$ integrable function  with compact support. Under an additional smallness assumption, the authors of \cite{bcp} also proved a global existence result. In \cite{agueh-kin}, the first author has extended the local existence result of \cite{bcp} to more general interaction potentials $W$ and to any dimension, $d\geq 1$.  The proof of \cite{agueh-kin} is based on a splitting of the kinetic equation (\ref{eqgene}) into a free transport equation in $x$, and a collision equation in $v$ that is interpreted as the gradient flow of a convex interaction energy with respect to the quadratic Wasserstein distance. In \cite{aci}, various a priori estimates are obtained, in particular a global entropy bound (which thus rules out concentration in finite time) in dimension $1$ when $W''$ is subquadratic near zero.

\smallskip

Understanding  under which conditions one can hope for global existence or on the contrary expect explosion in finite time is mainly an open question. Let us remark that the weak formulation of \pref{eqgene} means that for any $T>0$ and any $\phi\in C_c^{\infty}([0,T]\times \R^d\times \R^d)$ one has
\[\begin{split}
\int_0^T \int_{\R^d\times \R^d} ( \partial_t \phi(t,x,v) f_t(x,v) + \nabla_x \phi(t,x,v)\cdot v f_t(x,v) ) \mbox{d} x \mbox{d} v \mbox{d} t \\
=\int_{\R^d\times \R^d}  \phi(T,x,v) f_T(x,v) \mbox{d} x \mbox{d} v-\int_{\R^d\times \R^d}  \phi(0,x,v) f_0(x,v) \mbox{d} x \mbox{d} v\\
+ \int_0^T \int_{\R^d\times \R^d\times \R^d} \nabla_v \phi(t,x,v)\cdot \nabla W(v-u)  f_t(x,v) f_t(x,u) \mbox{d} x \mbox{d} u  \mbox{d} v \mbox{d} t 
\end{split}\]
and for the right hand side to make sense, it is necessary to have a control on nonlinear quantities like 
\[ \int_0^T \int_{\R^d\times \R^d\times \R^d}   f_t(x,v) f_t(x,u) \mbox{d} x \mbox{d} u \mbox{d} v \mbox{d} t\]
 which actually makes it difficult to define measure solutions (this also explains why in \cite{bcp} or \cite{agueh-kin}, the authors look for $L^1\cap L^\infty$ solutions). Observing that \pref{eqgene} can be written in conservative form as
 \[\partial_t f + \dive_{x,v} (f F(f))=0, \mbox{ with } F(f)(x,v)=(v, -(\nabla W\star_v f)(x,v)),\]
 we see that, at least for smooth solutions, \pref{eqgene} can be integrated using the method of characteristics:
 \[f_t={S_t}_\# f_0\]
 where $S_t$ is the flow of the vector-field $F(f)$ i.e. 
 \[S_0(x,v)=(x,v), \; \frac{d} {dt} S_t(x,v)=F(f_t)(S_t(x,v)),\]
and $f_t={S_t}_\# f_0$ means that
 \[\int_{\R^d\times \R^d}\varphi(x,v) f_t(x,v)\mbox{d} x \mbox{d} v  = \int_{\R^d\times \R^d}\varphi(S_t(x,v)) f_0(x,v)\mbox{d} x \mbox{d} v, \; \forall \varphi\in C_b(\R^d\times\R^d).\]

  In the present work, we investigate the one-dimensional case with the quadratic kernel $W(v)=\frac{1}{2} \vert v\vert^2$ which is neither covered by the analysis of \cite{bcp} nor by the entropy estimate of \cite{aci} (actually the entropy cannot be globally bounded in this case, see \cite{aci}). In this case the convolution  takes the form
\[ \int_{\R}(v-u)f_t(x,u) \mbox{d} u=\rho_t(x) v -m_t(x),\]
where 
\begin{equation}\label{rhom}
\rho_t(x):=\int_{\R} f_t(x,v) \mbox{d} v, \; m_t(x):=\int_{\R} v f_t(x,v) \mbox{d} v,
\end{equation}
so that the kinetic equation \pref{eqgene} rewrites
\begin{equation}\label{kinquad}
\partial_t f_t(x,v)+v \partial_x f_t(x,v)= \partial_v\Big(f_t(x,v)(\rho_t(x) v -m_t(x)) \Big),
\end{equation}
and we supplement \pref{kinquad} with the initial condition 
\begin{equation}\label{initcond}
f\vert_{t=0}=f_0,
\end{equation}
where $f_0$ is a compactly supported probability density:
 \begin{equation}
 f_0\in L^1(\R^d\times \R^d), \; \int_{\R^d\times \R^d} f_0 \mbox{d} x \mbox{d} v=1
 \end{equation}
and
\begin{equation}\label{compsupp0}
\supp(f_0) \subset B_{R_x} \times B_{R_v}
\end{equation}
for some positive constants $R_x$ and $R_v$. We shall see later on, how to treat more general measures as initial conditions. Our first contribution is the observation that, thanks to a special first integral of motion for the characteristics system associated with \pref{kinquad}, one may define weak solutions not at the level of measures on the phase space but on a (possibly infinite) product of measures on the physical space. Our second contribution is to  show that this reformulation has a gradient flow structure for an energy functional with good properties which will enable us to prove global well-posedness. To the best of our knowledge, even if the situation we are dealing with is very particular, this is the first global result of this type for kinetic models of granular media. As pointed out to us by Yann Brenier, our analysis has some similarities with (but is different from) some models of sticky particles for pressureless flows (see \cite{bregre}, \cite{bgsw}) and Brenier's formulation of the Darcy-Boussinesq system \cite{brenierdb}.

The article is organized as follows. In section \ref{sec-firstint}, we show how a certain first integral  of motion can be used to  give a reformulation of \pref{kinquad} which allows for measure solutions.  Section \ref{sec-gf} investigates the gradient flow structure of this reformulation. Section \ref{sec-jko} proves global existence thanks to the celebrated Jordan-Kinderlehrer-Otto (henceforth JKO) implicit Euler scheme of \cite{jko} for a certain energy functional. In section \ref{sec-wp}, we prove uniqueness and stability and give some concluding remarks.

\section{A first integral and measure solutions}\label{sec-firstint}

\subsection{A first integral for classical solutions}

Let us consider a $C^1$ compactly supported initial condition $f_0$ and a classical solution $f$, that is a $C^1$ function which solves \pref{kinquad} in a pointwise sense on $\R_+\times \R^d\times \R^d$. It is then easy to show (see \cite{aci}) that $f$ remains compactly supported locally in time; more precisely \pref{compsupp0} and \pref{kinquad} imply that
\begin{equation}\label{compsuppt}
\supp(f_t)\subset B_{R_x+t R_v}\times B_{R_v}, \; \forall t\ge 0.
\end{equation}
The characteristics for \pref{kinquad} is the flow map for the second-order ODE
\begin{equation}\label{secondorderODE}
\ddot{X}= -\rho_t(X) \dot{X}+m_t(X)
\end{equation}
in the sense that
\[f_t =(X_t, V_t)_\# f_0\]
where $(X_0(x,v), V_0(x,v))=(x,v)$ and 
\begin{equation}\label{char}
\frac{d}{dt} X_t(x,v)=V_t(x,v), \; \frac{d}{dt} V_t(x,v)=-\rho_t(X_t(x,v)) V_t(x,v)+m_t(X_t(x,v)),\;
\end{equation}
with $\rho$ and $m$ being respectively the spatial marginal and momentum associated to $f$ defined by \pref{rhom}. Integrating \pref{kinquad} with respect to $v$, first gives:
\begin{equation}\label{divfree}
\partial_t \rho_t(x)+\partial_x m_t(x)=0, \; t\ge 0, \; x\in \R
\end{equation}
so that there is a stream potential $G$ such that
\begin{equation}
\rho=\partial_x G, \; m=-\partial_t G ,
\end{equation}
and since $\rho$ is a probability measure, it is natural to choose the integration constant in such a way that $G$ is the cumulative distribution function of $\rho$:
\begin{equation}\label{new1}
G_t(x)=\int_{-\infty}^x \rho_t(y) \mbox{d} y=\rho_t((-\infty, x]).
\end{equation}
Replacing (\ref{new1}) in \pref{secondorderODE} then gives
\[\ddot{X}= -\partial_xG_t(X) \dot{X}-\partial_t G_t(X)=-\frac{d} {dt} G_t(X) \] 
so that  $\dot{X}+G_t(X)$ is constant along the characteristics. Since $G_0$ can be deduced  from the initial condition $f_0$  by
\[G_0(x)=\int_{-\infty}^x \int_\R f_0(y,v) \mbox{d}v \; \mbox{d} y,\]
we have the following explicit first integral of motion for \pref{char}:
\begin{equation}\label{firstinteg}
V_t(x,v)+G_t( X_t(x,v))=v+G_0(x).
\end{equation}

\subsection{Reformulation and equivalence for classical solutions}\label{classic} 

In view of the first integral \pref{firstinteg}, it is natural to perform a change of variables on the initial conditions:
\[a(x,v):=v+G_0(x)), \;  \nu_0^a(x):=f_0(x,a-G_0(x))\]
so that for every $\phi\in C(\R\times \R)$ one has
\[\int_{\R\times \R} \phi(x, a(x,v)) f_0(x,v) \mbox{d} x  \mbox{d} v= \int_{\R\times \R} \phi(x,a)  \nu_0^a(x) \mbox{d} x  \mbox{d} a,\]
and then to rewrite the characteristics as a family of first-order ODEs parametrized by the label $a$:
  \begin{equation}\label{paramflow}
\frac{d}{dt} X_t^a(x)=a-G_t(X_t^a(x)),\; X_0^a (x)=x.
\end{equation}
The flow \pref{char} may then be rewritten as:
\[X_t(x,v)=X_t^a (x), \; V_t(x,v)=a-G_t(X_t^a(x)) \mbox{ for } a=a(x,v)=v+G_0(x).\]
Hence setting
\begin{equation}
\nu_t^a:={X_t^a}_\# \nu_0^a,
\end{equation}
the relation $f_t =(X_t, V_t)_\# f_0$ can be re-expressed as:
\begin{equation}\label{recoverf}
\int_{\R^2} \phi(x,v)f_t(x, v) \mbox{d}x \mbox{d} v=\int_{\R^2} \phi(x, a-G_t(x)) \nu_t^a(x) \mbox{d}x \mbox{d} a
\end{equation}
for every $t\ge 0$ and every test-function $\phi\in C(\R^2)$. This implies in particular that
\[\rho_t(x)=\int_\R \nu_t^a(x) \mbox{d}a\]
and then also 
\begin{equation}\label{coupling}
G_t(x)=\int_\R G_t^a(x) \mbox{d}a \; \mbox{  with }  \; G_t^a (x):=\nu_t^a((-\infty,x]).
\end{equation}

On the other hand, using \pref{paramflow}, we deduce that for each $a\in \R$, $\nu^a$ satisfies the continuity equation:
\begin{equation}\label{contnu}
\partial_t \nu_t^a +\partial_x \Big( \nu_t ^a (a-G_t(x))\Big)=0, \; \nu^a \vert_{t=0}(x)=\nu_0^a(x)=f_0(x, a -G_0(x)).
\end{equation}
Note that $\nu_t^a$ is a nonnegative measure but not necessarily a probability measure, its total mass being that of $\nu_0^a$ i.e. $h(a):=\int_\R f_0(x, a-G_0(x)) \mbox{d} x$.  

\smallskip

The previous considerations show that any classical solution of \pref{kinquad} is related to a solution of the system of continuity equations \pref{contnu}-\pref{coupling} with initial condition $f_0$ via the relation \pref{recoverf}. The converse is also true: if $\nu^a$ is a family of classical solutions of \pref{contnu} with $G^a$ and $G$ given by \pref{coupling}, then the time-dependent family of probability measures $f_t$ on $\R^2$ defined by \pref{recoverf} actually solves \pref{kinquad}. Indeed, by construction the spatial marginal $\rho$ of $f$  is $\partial_x G$; as for the momentum, we have
\[m_t(x):=\int_{\R} v f_t(x,v) \mbox{d}v =\int_{\R} (a-G_t(x)) \nu_t^a (x)  \mbox{d}a.\] 
Then,  thanks to \pref{contnu} and Fubini's theorem, we have
\[\begin{split}
\partial_t G(x)=\int_{-\infty}^x \int_\R \partial_t \nu^a(y) \mbox{d}y \mbox{d}a
=- \int_{-\infty}^x \int_\R  \partial_x(\nu_t^a(y) (a-G_t(y)))   \mbox{d}y \mbox{d}a\\=-\int_{\R} (a-G_t(x)) \nu_t^a (x) dx=-m_t(x).
\end{split}\]
Then  let us take a test-function $\phi\in C_c^1(\R^2)$, differentiating \pref{recoverf} with respect to time, using  $\partial_x G=\rho$, $\partial_t G=-m$, \pref{recoverf} and an integration by parts and \pref{contnu}, we have
\[\begin{split}
\frac{d}{dt} \int_{\R^2} \phi f_t&=\int_{\R^2} \Big( -\phi(x, a-G_t(x))\partial_x(\nu_t^a (a-G_t))+\partial_v \phi(x, a-G_t) m_t \nu_t^a  \Big)  \mbox{d}x \mbox{d}a\\
&=\int_{\R^2} \Big( \partial_x \phi(x, a-G_t(x))   -\partial_v \phi(x, a-G_t(x)) \rho_t(x) \Big)   (a-G_t(x)) \nu_t^a(x)  \mbox{d}x \mbox{d}a\\
&+\int_{\R^2} \partial_v \phi(x, v) m_t(x) f_t(x,v)  \mbox{d}x \mbox{d}v\\
&= \int_{\R^2}( \partial_x \phi(x, v) v+ \partial_v \phi(x, v) (m_t(x)-\rho_t(x)v) f_t(x,v))\mbox{d}x \mbox{d}v.
\end{split}\]

This proves that, for classical solutions, the kinetic equation \pref{kinquad} is actually equivalent to the system of PDEs \pref{contnu}-\pref{coupling} indexed by the label $a$.  

\subsection{Measure solutions}

We now take the system \pref{contnu}-\pref{coupling} as a starting point to define measure solutions. We have to suitably relax the system so as to take into account:

\begin{itemize}

\item the fact that shocks may occur i.e.  atoms of $\rho$ may appear in finite time, then the cumulative distribution $G$ may become discontinuous (in which case it will be convenient to view $G$, which is monotone, as a set-valued map),

\item the fact that when shocks occur, the velocity may depend on the label $a$,

\item more general initial conditions.

\end{itemize}

Let us treat first the case of more general initial conditions. What really matters is to be able to make the change of variables $a=v+G_0(x)$ in a non-ambiguous way, which can be done as soon as $\rho_0$ is atomless i.e. does not charge points. We shall therefore assume that $f_0$ is a probability measure on $\R^2$ with compact support and having an atomless spatial marginal:
\begin{equation}\label{hypf0}
\supp(f_0) \subset B_{R_x} \times B_{R_v}, \; \rho_0 \mbox{ is atomless i.e. } f_0(\{x\}\times \R)=0, \; \forall x\in \R.
\end{equation}
Defining the spatial marginal $\rho_0$ of $f_0$ by
\[\int_\R \phi(x) \mbox{d}\rho_0(x)=\int_{\R^2} \phi(x) \mbox{d}f_0(x,v), \; \forall \phi \in C(\R)\]
as well as its cumulative distribution function
\[G_0(x):=\rho_0((-\infty,x])=f_0((-\infty,x]\times \R), \; \forall x\in \R,\]
$G_0$ is continuous and $\rho_0$ is suppported on $[-R_x, R_x]$. Since $G_0$ takes values in $[0,1]$, then $a(x,v):=v+G_0(x)\in [-R_v, R_v+1]$ for $(x,v)\in \supp(f_0)$. We then define the probability measure $\eta_0$ as the push-forward of $f_0$ through $(x,v)\mapsto (x, a(x,v))$ i.e.
\begin{equation}
\eta_0(C):=f_0 \Big(\{ (x,v) \; : \; (x,v+G_0(x))\in C\}\Big), \mbox{ for every Borel subset  $C$ of $\R^2$}.
\end{equation}
We then fix a $\sigma$-finite measure $\mu$  such that the second marginal of $\eta_0$ is absolutely continuous with respect to $\mu$; for instance it could be the second marginal of $\eta_0$, but we allow $\mu$ to be a more general measure (not necessarily a probability measure; for instance it was the Lebesgue measure in the previous paragraph \ref{classic}, and in the discrete example of paragraph \ref{sec-burg} below, $\mu$ will be a discrete measure). Then we can disintegrate $\eta_0$ as $\eta_0=\nu_0^a \otimes \mu$ which means that  for every $\phi \in C(\R^2)$ we have
\[\int_{\R^2} \phi(x, v+G_0(x)) \mbox{d} f_0(x,v)=\int_{\R} \Big( \int_\R \phi(x,a) \mbox{d} \nu_0^a(x)\Big) \mbox{ d} \mu(a).\]
Note that $\nu_0^a$ is supported on $[-R_x, R_x]$ and it is not necessarily a probability measure. We denote by $h(a)$ its total mass i.e. the Radon-Nikodym density of the second marginal of $\eta_0$ with respect to $\mu$:
\begin{equation}\label{defdeh}
 \int_{\R^2} \phi(v+G_0(x)) \mbox{d} f_0(x,v)=\int_{\R}  \phi(a) h(a) \mbox{ d} \mu(a), \; \forall \phi \in C(\R)
 \end{equation}
 so that $h\in L^1(\mu)$, $\int_{\R} h(a) \mbox{ d} \mu(a)=1$ and $h=0$ outside of the interval $[-R_v, R_v+1]$.
 
 \smallskip
 
 The rest of the paper will be devoted to study the structure and  well-posedness of the following system which relaxes to a measure-valued setting the system \pref{contnu}-\pref{coupling}:
\begin{equation}\label{s1}
\partial_t \nu_t^a +\partial_x (\nu_t^a v_t^a)=0, \; \nu^a\vert_{t=0} =\nu_0^a,
\end{equation} 
 subject to the constraint that
 \begin{equation}\label{s2}
  v_t^a(x) \in [a-G_t(x), a-G_t^{-}(x)]
 \end{equation} 
 where
 \begin{equation}\label{s3}
  \rho_t :=\int_{\R} \nu_t^a \mbox{d}\mu(a), G_t(x)=\rho_t((-\infty, x]), \;  G_t^-(x)=\rho_t((-\infty, x)).
 \end{equation}
 
Note that when $\mu$ is the Lebesgue measure and there are no shocks i.e. when $G_t$ is continuous, we recover the system \pref{contnu}-\pref{coupling} of paragraph \ref{classic}. Denoting by $\PP_2(\R)$ the set of Borel probability measures on $\R$ with finite second moment,  solutions of \pref{s1}-\pref{s2}-\pref{s3} are then formally defined by:

\begin{defi}
Fix a time $T>0$; a measure solution of the system \pref{s1}-\pref{s2}-\pref{s3}  on $[0,T]\times \R$ is a family of measures $(t,a) \in [0,T]\times [-R_v, R_v+1]  \mapsto \nu_t^a \in h(a) \PP_2(\R)$ which

\begin{enumerate}

\item is measurable in the sense that for every Borel bounded function $\phi$ on  $[0,T]\times \R\times \R$, the map $(t,a)\mapsto \int_\R \phi(t,a,x)\mbox{d} \nu_t^a(x)$  is $ \mbox{d}t \otimes \mu$ measurable,

\item satisfies the continuity equation \pref{s1} in the sense of distributions for $h\mu$-a.e. $a$, with a  $\nu_t^a\otimes \mu \otimes \mbox{d}t$-measurable velocity field $v_t^a$ which satisfies \pref{s2}, $\nu_t^a\otimes \mu \otimes \mbox{d}t$ a.e, and with $G_t$ and $G_t^{-}$ defined by \pref{s3}.

\end{enumerate}

\end{defi}

Note that since $v_t^a$ constrained by \pref{s3} is bounded, $t\mapsto \nu_t^a$ is actually continuous for the weak convergence of measures for $h\mu$ a.e. $a$. Note also that the fact that $t\mapsto \nu_t^a$ satisfies the continuity equation \pref{s1} in the sense of distributions is equivalent to the condition that for every $\psi\in C( [-R_v, R_v+1])$ and $\phi\in C_c^1([0,T]\times \R)$ one has:
\[\begin{split}
\int_{\R} \psi(a) \Big(\int_0^T \int_{\R} (\partial_t \phi(t,x)+\partial_x \phi(t,x) v_t^a(x) ) \mbox{d} \nu_t^a(x) \mbox{d} t\Big) \mbox{d} \mu(a)\\
= \int_{\R} \psi(a) \Big( \int_{\R} \phi(T,x) \mbox{d} \nu_T^a(x)-  \int_{\R} \phi(0,x) \mbox{d} \nu_0^a(x)  \Big) \mbox{d} \mu(a).
\end{split}\]

\subsection{A discrete example and a system of Burgers equations}\label{sec-burg}

As an example, let us consider the special case 
\[f_0=\rho_0 \otimes \frac{1}{N} \sum_{i=1}^N \delta_{a_i-G_0(x)}\]
where $\rho_0$ is a smooth compactly supported probability density and $a_1<\cdots <a_N$ are the finitely many values that the label $a$ may take. In this case, we take $\mu$ as the counting measure and then
\[\mu= \sum_{i=1}^N \delta_{a_i}, \; h(a_i)=\frac{1}{N}, \; \nu_0^{a_i}=\frac{1}{N} \rho_0.\]
Even though $G_0$ is smooth, we have to expect that shocks may appear in finite time. Let us relabel the measures $\nu^i:=\nu^{a_i}$ and the corresponding cumulative distributions $G^i:=G^{a_i}$, $G:=\sum_{j=1}^N G^j$. If there were no shocks, the system   \pref{s1}-\pref{s2}-\pref{s3}  would become 
\begin{equation}\label{discontsy}
\partial_ t \nu^i +\partial_x (\nu^i(a_i-\sum_{j=1}^N G_j))=0, \; \nu^i\vert_{t=0}=\frac{1}{N} \rho_0, \; i=1, \cdots, N.
\end{equation}
Integrating with respect to the spatial variable between $-\infty$ and $x$ would then give a system of Burgers-like equations:
\begin{equation}\label{burgers}
\partial_t G^i +\partial_x G^i (a_i-\sum_{j=1}^N G^j)=0, \; G^i\vert_{t=0}=\frac{1}{N} G_0, \; i=1, \cdots, N.
\end{equation}
We can at least formally rewrite each of these equations in the more familiar form
\[ \partial_t G^i +\partial_x G^i \psi^i_t(G^i) =0\]
where each function $\psi^i$ is implicitly defined in terms of the pseudo inverse $H^i_t$ of $G^i_t$:
\[\psi^i_t (\alpha)=a_i-\alpha-\sum_{j\neq i} G^j_t (H^i_t(\alpha)).\]
Note that $\psi_t^i$ is decreasing for every $t$ and actually $(\psi_t^i)'\le -1$. In the absence of shocks,  $H_t^i$ simply solves $\partial_t H^i=\psi^i_t$. Let us then  take $x_1<x_2$ belonging to a certain interval on which $\rho_0\ge \nu$ with $\nu>0$ and  define $y_1:=\frac{1}{N} G_0(x_1)$, $y_2:=\frac{1}{N} G_0(x_2)$, we then have $y_2-y_1=\frac{1}{N} \int_{x_1}^{x_2} \rho_0\ge \frac{\nu}{N}(x_2-x_1)$. Integrating $\partial_t H^i=\psi^i_t$ and using the fact that $(\psi^i)'\le -1$, we get
\[H^i_t( y_2)-H^i_t(y_1)=x_2-x_1 +\int_0^t (\psi^i_s(y_2)-\psi^i_s(y_1)) \mbox{d} s\le x_2-x_1-t(y_2-y_1).\]
This means that $H^i_t$ becomes noninjective before a time
\[\frac{x_2-x_1}{y_2-y_1}\le \frac{N}{\nu}.\]
In other words, discontinuities of $G^i$ i.e. shocks appear in finite time $O(N)$. 


\section{A gradient flow structure}\label{sec-gf}

In this section, assuming \pref{hypf0} we will see how to obtain solutions to the system \pref{s1}-\pref{s2}-\pref{s3} by a gradient flow approach. Existence of such gradient flows using the JKO implicit scheme for Wasserstein gradient flows will be detailed in section \ref{sec-jko}. We denote by $\M(\R^d)$ the set of Borel measures on $\R^d$ and $\PP(\R^d)$ the set of Borel probability measures on $\R^d$. Given two nonnegative Borel measures on $\R^d$ with common finite total mass $h$ (not necessarily $1$) and finite $p$-moments, $\nu$ and $\theta$, recall that for $p\in[1,+\infty)$, the $p$-Wasserstein distance between $\nu$ and $\theta$ is by definition:
\[W_p(\nu, \theta):= \inf_{\gamma\in \Pi(\nu, \theta) }  \Big\{\int_{\R^d\times \R^d} \vert x-y\vert^p  \mbox{d} \gamma(x,y) \Big\}^{\frac{1}{p}} \]
where $ \Pi(\nu, \theta)$ is the set of transport plans between $\nu$ and $\theta$ i.e. the set of  Borel probability measures on $\R^d\times \R^d$ having $\nu$ and $\theta$ as marginals (we refer to the textbooks of Villani \cite{villani, villani2} for a detailed exposition of optimal transport theory). Wasserstein distances are usually defined between probability measures  such as $h^{-1} \nu$ and  $h^{-1} \theta$ , but of course they extend to measures with the same total mass and $W_p^p(\nu, \theta)=h W_p^p(h^{-1} \nu, h^{-1}\theta)$. We shall mainly use the $2$-Wasserstein distance but the $1$-Wasserstein distance will be useful as well in the sequel. We also recall that the $1$-Wasserstein distance can also be defined through the Kantorovich duality formula (see for instance \cite{villani, villani2}):
\begin{equation}\label{kantodual}
W_1(\nu, \theta):=\sup \Big \{ \int_{\R^d} f \mbox{d} (\nu-\theta) \; : \; f \mbox{ $1$-Lipschitz} \Big\}.
\end{equation}

We will see in section \ref{sec-jko} that one may obtain solutions to the system \pref{s1}-\pref{s2}-\pref{s3} by a minimizing scheme for an energy defined on an infinite product of spaces of measures parametrized by the label $a$. Wasserstein gradient flows on finite products have recently been investigated in \cite{dff}, \cite{cl}. To our knowldege the case of an infinite product is new in the literature.

\subsection{Functional setting}

Let $A:=[-R_v, R_v+1]$ and denote by $X$ the set consisting of all $\bn:=(\nu^a)_{a\in A}$, $\mu$-measurable families of measures such that 
\[ \nu^a(\R)=h(a); \; \mbox{ for $\mu$-a.e. $a$ and } \int_A \int_\R x^2  \mbox{d} \nu^a(x) \mbox{d} \mu(a)<+\infty.  \]
Given $R>0$ (the precise choice of $R$ will be made later on, see \pref{choixduR} below), let us denote by $X_R$ the subset of $X$ defined by
\begin{equation}\label{defXR}
X_R:=\{\bn\in X \; : \; \supp(\nu^a)\subset [-R, R], \mbox{ for $\mu$-a.e. $a\in A$}\}.
\end{equation}

For $\bn\in X_R$, let us define the probability (because $\int_\R h(a) \mbox{d} \mu(a)=1$) measure
\[ \bnu:=\int_{\R} \nu^a \mbox{d} \mu(a)\]
and the energy
\begin{equation}\label{defenergy}
J(\bn)=\frac{1}{4} \int_{\R\times \R} \vert x-y\vert  \mbox{d} \bnu(x) \mbox{d} \bnu(y)+\int_A \int_{\R} \Big(\frac{1}{2}-a\Big) x \mbox{d} \nu^a(x) \mbox{d} \mu(a).
\end{equation}
Note that $J$ is unbounded from below on the whole of $X$ but it is bounded on each $X_R$. Note also that the interaction term can be rewritten as:
\begin{equation}\label{expandint}
 \int_{\R\times \R} \vert x-y\vert  \mbox{d} \bnu(x) \mbox{d} \bnu(y)= \int_{\R^4} \vert x-y\vert  \mbox{d} \nu^a(x)\mbox{d} \nu^b(y) \mbox{d} \mu(a) \mbox{d} \mu(b).
\end{equation}
We equip $X_R$ with the distance $d$ given by:
\begin{equation}\label{defded}
d^2(\bn, \bt):=\int_A W_2^2(\nu^a, \theta^a) \mbox{d} \mu(a), \; (\bn, \bt)=((\nu^a)_{a\in A}, (\theta^a)_{a\in A}) \in X_R\times X_R.
\end{equation}
It will also be convenient to work with the weak topology on $X_R$ that is the one defined by the family of semi-norms
\[p_\phi(\bn):=\Big\vert \int_{A\times[-R,R]} \phi \mbox{d}  (\bn\otimes \mu) \ \Big\vert, \; \phi \in C(A\times [-R, R])\]
where $\bn\otimes \mu$ is the probability measure defined by
\[ \int_{A\times [-R,R]}  \phi \mbox{d}  (\bn\otimes \mu)   := \int_{A} \Big(\int_{[-R,R]} \phi(a,x) \mbox{d} \nu^a (x)\Big) \mbox{d} \mu(a)\]
and
\[K:=A\times [-R, R]\]
so that convergence for the weak topology is nothing but weak-$*$ convergence of $\bn\otimes \mu$. Since for all $\bn\in X_R$, $\bn\otimes \mu$ is a probability measure on  the compact set $A\times [-R, R]$, $X_R$ is compact for the weak topology. Note also that since the weak-$*$ topology is metrizable by the Wasserstein distance (see \cite{villani, villani2})  on the set of probability measures on a compact set of $\R^2$, the weak topology is metrizable by the distance $d_w$:
\begin{equation}\label{defdedw}
d_w^2(\bn, \bt):= W_2^2(\bn\otimes \mu, \bt\otimes \mu), \; (\bn, \bt)\in X_R\times X_R,
\end{equation}
so that $(X_R, d_w)$ is a compact metric space. We summarize the basic properties of $J$, $d$ and $d_w$ in the following.

\begin{lem}\label{basic}
Let $X_R$, $J$, $d$ and $d_w$ be defined as above then we have:

\begin{enumerate}

\item $J$ is Lipschitz continuous for $d_w$,

\item $d_w\le d$, 

\item $d$ is lower semicontinous for $d_w$: if $(\bn_n)_n$ is a sequence in $X_R$, $(\bn, \bt)\in X_R\times X_R$ and $\lim_n d_w(\bn_n, \bn)=0$ then $\liminf_n d^2(\bn_n, \bt)\ge d^2(\bn, \bt)$.

\end{enumerate}

\end{lem}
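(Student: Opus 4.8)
The plan is to dispatch the three assertions in the order (2), (1), (3): item~(2) provides the comparison of the two metrics that is used afterwards, and the identification of $d$ with an optimal transport cost on the \emph{compact} set $K$ is the mechanism behind item~(3).

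For (2) I would build a ``fibered'' coupling. By a standard measurable selection of optimal plans, choose for $\mu$-a.e. $a$ a $\gamma^a\in\Pi(\nu^a,\theta^a)$ with $\int_{\R^2}|x-y|^2\,d\gamma^a=W_2^2(\nu^a,\theta^a)$, depending $\mu$-measurably on $a$, and set $\Gamma:=\gamma^a\otimes\mu$, regarded as a measure on $K\times K$ through $(a,x,y)\mapsto((a,x),(a,y))$. One checks at once that $\Gamma$ is a probability measure, that $\Gamma\in\Pi(\bn\otimes\mu,\bt\otimes\mu)$, and that it is concentrated on $\{a=b\}$, so that the $a$-component of the quadratic cost on $K\times K=(\R^2)^2$ contributes nothing:
\[
d_w^2(\bn,\bt)=W_2^2(\bn\otimes\mu,\bt\otimes\mu)\le\int_{K\times K}\!\big(|a-b|^2+|x-y|^2\big)\,d\Gamma=\int_A W_2^2(\nu^a,\theta^a)\,d\mu(a)=d^2(\bn,\bt).
\]

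For (1) write $J=\tfrac14 I+L$ with $L(\bn)=\int_K g\,d(\bn\otimes\mu)$, $g(a,x)=(\tfrac12-a)x$, and $I(\bn)=\int_{K\times K}|x-y|\,d(\bn\otimes\mu)(a,x)\,d(\bn\otimes\mu)(b,y)$ (recall $\bnu$ is the push-forward of $\bn\otimes\mu$ under $(a,x)\mapsto x$). On the compact set $K$ the smooth function $g$ is $C_1$-Lipschitz for an explicit $C_1=C_1(R,R_v)$, so by the Kantorovich--Rubinstein formula \pref{kantodual} and $W_1\le W_2$ (both $\bn\otimes\mu$ and $\bt\otimes\mu$ are probability measures) one gets $|L(\bn)-L(\bt)|\le C_1 W_1(\bn\otimes\mu,\bt\otimes\mu)\le C_1\,d_w(\bn,\bt)$. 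For the interaction term use bilinearity, $m\otimes m-m'\otimes m'=(m-m')\otimes m+m'\otimes(m-m')$ with $m=\bn\otimes\mu$, $m'=\bt\otimes\mu$; the kernel $((a,x),(b,y))\mapsto|x-y|$ is $1$-Lipschitz in each of its two $K$-arguments, and integrating it against a probability measure in one slot leaves a $1$-Lipschitz function of the other, so \pref{kantodual} gives $|I(\bn)-I(\bt)|\le 2\,W_1(\bn\otimes\mu,\bt\otimes\mu)\le 2\,d_w(\bn,\bt)$. Hence $J$ is $(\tfrac12+C_1)$-Lipschitz for $d_w$.

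For (3) the crucial step is the identity
\[
d^2(\bn,\bt)=\min_{\Gamma\in\Pi(\bn\otimes\mu,\bt\otimes\mu)}\int_{K\times K}c\,d\Gamma,\qquad c\big((a,x),(b,y)\big)=\begin{cases}|x-y|^2,& a=b,\\[2pt] +\infty,& a\ne b,\end{cases}
\]
proved by disintegrating an arbitrary finite-cost plan along the coordinate $a$ (such plans being concentrated on $\{a=b\}$) together with the scaling $W_2^2(\nu,\theta)=h\,W_2^2(h^{-1}\nu,h^{-1}\theta)$ for the lower bound, and the measurable fibered coupling of (2) for the upper bound. The cost $c$ is lower semicontinuous and bounded below and $K$ is compact, so if $d_w(\bn_n,\bn)\to0$, i.e. $\bn_n\otimes\mu\weakstarto\bn\otimes\mu$, the optimal plans $\Gamma_n$ for $(\bn_n\otimes\mu,\bt\otimes\mu)$ lie in the weak-$*$ compact set $\PP(K\times K)$; passing to a subsequence along which $\int c\,d\Gamma_n\to\liminf_n d^2(\bn_n,\bt)$ and $\Gamma_n\weakstarto\Gamma$, the marginals pass to the limit so $\Gamma\in\Pi(\bn\otimes\mu,\bt\otimes\mu)$, and the Portmanteau theorem for l.s.c.\ functions bounded below yields $d^2(\bn,\bt)\le\int c\,d\Gamma\le\liminf_n\int c\,d\Gamma_n=\liminf_n d^2(\bn_n,\bt)$.

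The main obstacle is item~(3): weak-$*$ convergence of $\bn_n\otimes\mu$ says nothing about the individual fibers $\nu_n^a$, so a naive Fatou argument on $\int_A W_2^2(\nu_n^a,\theta^a)\,d\mu(a)$ fails; recasting $d^2$ as optimal transport on $K$ for the extended-real-valued cost $c$ is what resolves this, the price being the disintegration and measurable-selection bookkeeping behind the identity above. (Alternatively, the lower semicontinuity follows from Kantorovich duality, writing $d^2(\bn,\bt)$ as a supremum of weak-$*$ continuous affine functionals of $\bn\otimes\mu$.)
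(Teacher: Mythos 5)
Your items~(1) and~(2) follow essentially the same route as the paper: for~(2) the paper also builds the measure $\alpha=\gamma^a\otimes\mu$ on $K\times K$ supported on the diagonal in $a$ and uses that the cost does not see the $a$--component, and for~(1) the paper likewise splits $J=\tfrac14 J_0+J_1$, controls $J_1$ by the Lipschitz bound $\pref{rappel2}$ with $W_1\le W_2$, and controls $J_0$ by the tensorization estimate $W_1(m\otimes m,m'\otimes m')\le 2W_1(m,m')$, which is precisely your bilinearity splitting. Item~(3), however, is genuinely different. The paper works fiberwise: it picks $\mu$-measurable optimal plans $\gamma_n^a\in\Pi(\nu_n^a,\theta^a)$, extracts a weak-$*$ limit $\gamma_n^a\otimes\mu\weakstarto\gamma^a\otimes\mu$, verifies $\gamma^a\in\Pi(\nu^a,\theta^a)$ for $\mu$-a.e.\ $a$ via test functions $\psi(a)(\alpha(x)+\beta(y))$, and then uses that $|x-y|^2$ is bounded and continuous on the compact supports so the integral passes to the limit, finishing with the pointwise inequality $\int|x-y|^2\,\mbox{d}\gamma^a\ge W_2^2(\nu^a,\theta^a)$. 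You instead package $d^2$ as a single optimal transport cost on $K\times K$ for the extended-real-valued lsc cost $c$ which is $+\infty$ off $\{a=b\}$, and invoke the standard Portmanteau lower semicontinuity of $\Gamma\mapsto\int c\,\mbox{d}\Gamma$ together with stability of marginals. This is more conceptual and sidesteps the explicit fiberwise limit argument, at the price of establishing the identity $d^2(\bn,\bt)=\min_{\Gamma\in\Pi(\bn\otimes\mu,\bt\otimes\mu)}\int c\,\mbox{d}\Gamma$, whose lower-bound half needs the observation that finite-cost plans are supported on $\{a=b\}$ hence have diagonal $(a,b)$-marginal $(\id,\id)_\#(h\mu)$ and disintegrate as $h(a)\sigma^a\otimes\mu$ with $h(a)\sigma^a\in\Pi(\nu^a,\theta^a)$ — a bookkeeping step you only gesture at, but which is routine. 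Both proofs of~(3) are correct; yours trades the paper's explicit fiberwise weak compactness for a cleaner abstract OT lower semicontinuity argument (and you correctly flag why a naive fiberwise Fatou would fail, which is exactly the point the paper's test-function step is addressing). The alternative you mention via Kantorovich duality with $\sup$ over continuous potentials is also valid for lsc costs taking the value $+\infty$.
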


\begin{proof}

Let us recall  that if $\theta$ and $\nu$ are  (compactly supported say) probability measures on $\R^d$ then by Cauchy Schwarz-inequality,
\begin{equation}\label{rappel1}
W_1( \nu, \theta)\le W_2(\nu, \theta)
\end{equation}
and, it follows from \pref{kantodual} that, if $f$ is $M$-Lipschitz then
\begin{equation}\label{rappel2}
\int_{\R^d} f\mbox{d} (\nu-\theta)\le M W_1(\nu, \theta).
\end{equation}
Moreover, 
\begin{equation}\label{rappel3}
W_1(\nu\otimes \nu, \theta\otimes \theta)\le 2 W_1(\nu, \theta).
\end{equation}

1. Let us rewrite $J$ as 
\[J(\bn)=\frac{1}{4} J_0(\bn)+J_1(\bn), \]
with
\begin{equation}\label{J0}
J_0(\bn):=\int_{K^2} \vert x-y\vert \mbox{d} (\bn\otimes \mu)(a,x) \mbox{d} (\bn\otimes \mu)(b,y),
\end{equation}
and
\begin{equation}\label{J1}
J_1(\bn):=\int_K \Big(\frac{1}{2}-a\Big)x  \mbox{d} (\bn\otimes \mu)(a,x).
\end{equation}
The fact that $J_1$ is Lipschitz for $d_w$ directly follows from \pref{rappel1}, \pref{rappel2} and the fact that the integrand in $J_1$ is uniformly Lipschitz in $x$. As for $J_0$,  using also \pref{rappel3} and the fact that the distance is $1$-Lipschitz, we have 
\[\begin{split}
J_0(\bn)-J_0(\bt)\le W_1((\bn\otimes \mu) \otimes (\bn\otimes \mu), (\bt\otimes \mu) \otimes (\bt\otimes \mu))\\
\le 2 W_2(\bn\otimes \mu, \bt\otimes \mu)=2d_w(\bn, \bt).
\end{split}\]

2. Let $\bn=(\nu^a)_{a\in A}$ and $\bt=(\theta^a)_{a\in A}$ be two elements of $X_R$ and let $\gamma^a$ be an optimal plan between $\nu^a$ and $\theta^a$ (which can be chosen in a $\mu$-measurable way, thanks to standard measurable selection arguments, see \cite{cv}). Let us then define the probability measure $\alpha$ on $K^2$ by
\[\begin{split}
&\int_{K\times K} \phi((a,x), (b, y)) \mbox{d} \alpha(a,x, b,y)\\
&:=\int_A \Big( \int_{[-R,R]^2}   \phi((a,x), (a,y)) \mbox{d} \gamma^a(x,y) \Big) \mbox{d} \mu(a)
\end{split}\]
for all $\phi \in C(K\times K)$. Observing that $\alpha \in \Pi(\bn\otimes \mu, \bt\otimes \nu)$, we get
\[\begin{split}
d^2_w(\bn, \bt)\le \int_{K\times K} \vert x-y\vert^2 \mbox{d} \alpha(a,x, b,y) =\int_A \Big( \int_{[-R,R]^2} \vert x-y\vert^2 \mbox{d} \gamma^a (x,y) \Big)\mbox{d} \mu(a)\\
= \int_A   W_2^2 (\nu^a, \theta^a) \mbox{d} \mu(a)=d^2(\bn, \bt).
\end{split}\]


3. Let $\gamma_n^a$ be an optimal plan ($\mu$-measurable with respect to $a$) between $\nu_n^a$ and $\theta^a$. Again passing to a subsequence if necessary we may assume that $\gamma_n^a\otimes \mu$ weakly $*$ converges to some measure of the form $\gamma^a \otimes \mu$. Using test-functions of the form $\psi(a)(\alpha(x)+\beta(y))$ we deduce easily that for $\mu$-almost every $a$, $\gamma^a\in \Pi(\nu^a, \theta^a)$ and then
\[\begin{split}
\liminf_n d^2(\bn_n, \bt) =\liminf \int_{A} \int_{[-R,R]^2} \vert x-y \vert^2 \mbox{d} \gamma_n^a (x,y) \mbox{ d}\mu( a)\\
= \int_A\int_{[-R,R]^2} \vert x-y \vert^2 \mbox{d} \gamma^a (x,y) \mbox{ d}\mu( a) \ge d^2(\bn, \bt).
\end{split}\]

\end{proof}

\subsection{Subdifferential of the energy and gradient flows as measure solutions}

Let us start with some convexity properties of $J$. Let $\bn=(\nu^a)_{a\in A}$ and $\bt$ belong to $X_R$ and let $\bg:=(\gamma^a)_{a\in A}$ be a measurable family of transport plans between $\nu^a$ and $\theta^a$ (which we shall simply denote by $\bg\in \Pi(\bn, \bt)$). For $\eps\in [0,1]$, then define
\begin{equation}\label{interpp}
\bn_\eps:=((1-\eps) \pi_1+\eps\pi_2)_\# \gamma^a)_{a\in A}
\end{equation}
where $\pi_1$ and $\pi_2$ are the canonical projections $\pi_1(x,y)=x$, $\pi_2(x,y)=y$. Then $\eps\in [0,1]\mapsto \bn_\eps$ is a curve which interpolates between $\bn$ and $\bt$. Similarly if we take transport plans $\gamma^a$ induced  by maps of the form $\id +\xi^a$ with $\bxi=(\xi^a)_{a\in A} \in L^{\infty} (\bn \otimes \mu)$  i.e. $\theta^a=(\id+\xi^a)_\#\nu^a$ then $\nu_\eps^a=(\id+\eps \xi^a)_\#\nu^a$ and in this case, we shall simply denote $\bxi:=(\xi^a)_{a\in A}$ and $\bn_\eps$ as 
\[\bn_\eps=(\bid +\eps \bxi)_\#\bn, \; \bt=(\bid + \bxi)_\#\bn.\]

\begin{lem}\label{convex}
Let $\bn$ and $\bt$ be in $X_R$, $\bg\in  \Pi(\bn, \bt)$ and $\bn_\eps$ be given by \pref{interpp}. Then 
\[J(\bn_\eps)\le (1-\eps) J(\bn)+\eps J(\bt), \; \forall \eps\in [0,1].\]
In particular, the same inequality holds  if $\bn_\eps=(\bid +\eps \bxi)_\#\bn$ with $\bxi\in L^{\infty} (\bn \otimes \mu)$.

\end{lem}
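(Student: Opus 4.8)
The plan is to use the splitting $J=\tfrac14 J_0+J_1$ from \pref{J0}--\pref{J1} and to observe that the confinement part $J_1$ is \emph{affine} along the interpolation $\eps\mapsto\bn_\eps$, while the interaction part $J_0$ is convex by the triangle inequality for $|\cdot|$. Adding the two then gives the claim, and the last sentence of the statement is a special case.

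First I would dispose of $J_1$. Since $\gamma^a\in\Pi(\nu^a,\theta^a)$, for every affine $\ell:\R\to\R$ one has, by the change of variables for the push-forward, $\int_\R\ell\,d\nu_\eps^a=\int_{\R^2}\ell\bigl((1-\eps)x+\eps y\bigr)\,d\gamma^a(x,y)=(1-\eps)\int_\R\ell\,d\nu^a+\eps\int_\R\ell\,d\theta^a$. Taking $\ell(x)=(\tfrac12-a)x$ and integrating against $\mu$ gives $J_1(\bn_\eps)=(1-\eps)J_1(\bn)+\eps J_1(\bt)$; note that this step uses only that the drift $x\mapsto(\tfrac12-a)x$ is linear in $x$.

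For $J_0$ I would introduce the aggregated transport plan $\Gamma:=\int_A\gamma^a\,d\mu(a)$, which is well defined and $\mu$-measurable by the same measurable selection argument already invoked in the proof of Lemma \ref{basic}; since $\int_A h(a)\,d\mu(a)=1$, $\Gamma$ is a Borel probability measure on $[-R,R]^2$ with marginals $\bnu=\int_A\nu^a\,d\mu(a)$ and $\overline\theta:=\int_A\theta^a\,d\mu(a)$. Because push-forward commutes with the $\mu$-integration, the aggregated measure of $\bn_\eps$ is $\overline{\bn_\eps}=\bigl((1-\eps)\pi_1+\eps\pi_2\bigr)_\#\Gamma$, whence
\[
J_0(\bn_\eps)=\int\!\!\int \abs{(1-\eps)(x_1-x_2)+\eps(y_1-y_2)}\,d\Gamma(x_1,y_1)\,d\Gamma(x_2,y_2).
\]
Applying $\abs{(1-\eps)u+\eps v}\le(1-\eps)\abs{u}+\eps\abs{v}$ pointwise in the integrand and then integrating out, in each of the two resulting terms, the variable that does not appear (using that $\Gamma$ has marginals $\bnu$ and $\overline\theta$), one gets $J_0(\bn_\eps)\le(1-\eps)J_0(\bn)+\eps J_0(\bt)$. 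Adding $\tfrac14$ of this to the identity for $J_1$ yields $J(\bn_\eps)\le(1-\eps)J(\bn)+\eps J(\bt)$. The final assertion is the special case $\gamma^a=(\id,\id+\xi^a)_\#\nu^a$, for which $\bn_\eps=(\bid+\eps\bxi)_\#\bn$ by definition (and $\bn_\eps\in X_R$, since its fibers are supported in $[-R,R]$ as convex combinations of points of $[-R,R]$, have mass $h(a)$, and finite second moment).

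I do not expect a genuine obstacle here: the entire content is the convexity (triangle inequality) of the interaction kernel $|\cdot|$ combined with the linearity of the confinement drift. The only points requiring a little care are the existence and $\mu$-measurability of the aggregated plan $\Gamma$ (so that the passage to the aggregated measures is legitimate), the verification that $\bn_\eps\in X_R$, and the routine Fubini bookkeeping needed to move between the disintegrated form $\int_A\int_\R\cdots\,d\nu^a\,d\mu$ and the aggregated form $\int_K\cdots\,d(\bn\otimes\mu)$ of $J_0$ and $J_1$.
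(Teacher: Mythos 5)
Your proof is correct and follows precisely the route the paper indicates in its one-line proof: split $J=\tfrac14 J_0+J_1$, use linearity of the integrand of $J_1$ to get equality, and use the triangle inequality for $|\cdot|$ (convexity of $J_0$ along the interpolation) to get the inequality. The only cosmetic difference is that you route the $J_0$ computation through the aggregated plan $\Gamma=\int_A\gamma^a\,d\mu(a)$, which is a tidy way to organize the Fubini bookkeeping but is not a genuinely different argument.
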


\begin{proof}
This immediately follows from the construction of $\bn_\eps$, the convexity of the absolute value in $J_0$ defined by \pref{J0} and the linearity in $x$ of the integrand in $J_1$ defined by \pref{J1}.
\end{proof}

\begin{defi}\label{defsousdiff}
Let $\bn\in X_R$, the subdifferential of $J$ at $\bn$, denoted $\partial J(\bn)$,  consists of all $\bw:=(w^a)_{a\in A} \in L^1(\bn \otimes \mu)$ such that for every $R'>0$, every $\bt\in X_{R'}$ and every $\bg=(\gamma^a)_{a\in A} \in \Pi(\bn, \bt)$, one has
\[J(\bt)-J(\bn) \ge \int_ {[-R,R]\times [-R', R']\times A}   w^a (y) (z-y)    \mbox{d} \gamma^a(y,z) \mbox{d}\mu(a).\]
\end{defi}

\begin{rem}\label{sousdiffdes}
An equivalent way  to define $\partial J(\bn)$  (which will turn out to be more convenient in the sequel to prove stability properties, see Lemma \ref{sdok}) is in terms of transition kernels rather than of transport plans. More precisely, given $\bn\in X_R$, we define the set $T(\bn)$ of $\bn\otimes \mu$ measurable  maps  $\bet$: $(a,y)\in K\mapsto \eta^{a,y}\in \PP(\R)$ such that there exists an $R'>0$ such that $\eta^{a,y}$ is supported by $[-R', R']$ for  $\bn\otimes \mu$ almost every $(a,y)\in K$. We then define $\bn_{\bet}=(\nu^a_{\bet})_{a\in A}$ by 
\[\int_{\R} \varphi(z) \mbox{d} \nu^a_{\bet}(z):=\int_{\R^2} \varphi(z) \mbox{d} \eta^{a,y} (z) \mbox{d} \nu^a (y), \; \forall \varphi \in C(\R).\] 
By construction, $\bg=(\gamma^a)_{a\in A}$ with $\gamma^a =\nu^a \otimes \eta^{a,y}$ defined by
\[\int_{\R^2} \varphi(y,z) \mbox{d} \gamma^a(y,z):=\int_{\R^2} \varphi(y,z) \mbox{d} \eta^{a,y} (z) \mbox{d} \nu^a (y), \; \forall \varphi \in C(\R^2)\] 
 belongs to $\Pi(\bn, \bn_{\bet})$ and thanks to the disintegration Theorem, it is then easy to check that  $\bw\in \partial J(\bn)$ if and only if, for every $\bet\in T(\bn)$, one has
\begin{equation}\label{sousdiffkernel}
J(\bn_\bet)-J(\bn)\ge \int_{\R^3} w^a (y)(z-y) \mbox{d} \eta^{a,y} (z)  \mbox{d} \nu^a (y) \mbox{d} \mu(a). 
\end{equation}

\end{rem}

\begin{rem}
If we restrict ourselves to transport maps (i.e. take $\eta^{a,y}=\delta_{\xi^a(y)}$ in \pref{sousdiffkernel}), we obtain a  condition which is weaker than definition \ref{defsousdiff} but somehow easier to handle. If $\bw:=(w^a)_{a\in A} \in L^1(\bn \otimes \mu)\in \partial J(\bn)$ then for every $\bxi=(\xi^a)_{a\in A} \in L^{\infty} (\bn \otimes \mu)$, one has 
\begin{equation}\label{sousdiffmap}
J((\bid + \bxi)_\#\bn)-J(\bn) \ge  \int_K \bw \bxi   \mbox{d}(\bn\otimes \mu)=\int_K w^a (x) \xi^a (x)  \mbox{d} \nu^a(x) \mbox{d}\mu(a).
\end{equation}
\end{rem}

\begin{rem}
The subdifferential $\partial J$ obviously has the following monotonicity property (which will be crucial for uniqueness, see section \ref{sec-wp}) : if $\bn_1$ and $\bn_2$ belong to $X_R$ and $\bw_1\in \partial J(\bn_1)$ and $\bw_2\in \partial J(\bn_2)$, then for every $\bg\in \Pi(\bn_1, \bn_2)$, one has
\begin{equation}\label{monsousdiff}
\int_{\R^3}  (w_1^a(y)-w_2^a(z))(y-z) \mbox{d} \gamma^a(y,z) \mbox{d} \mu(a)\ge 0. 
\end{equation}

\end{rem}


The connection between the subdifferential (in fact the weak condition \pref{sousdiffmap})  of the energy $J$ given by \pref{defenergy} and the condition \pref{s2} is clarified by the following:

\begin{prop}\label{linksys}
Let $\bn\in X_R$, if $\bw\in \partial J(\bn)$ then, defining the $a$-marginal of $\bn \otimes \mu$ by
\[\rho:=\int_A \nu^a \mbox{d} \mu(a)\]
and its cumulative distribution function by
\[G(x):=\rho((-\infty, x]),\; G^{-}(x)=\rho((-\infty, x)), \; \forall x\in \R,\]
we have
\begin{equation}\label{localsd}
w^a(x)\in [G^{-}(x)-a, G(x)-a]  \mbox{ for $\bn \otimes \mu$ a.e. $(a,x)$}.
\end{equation}
In particular $\bw \in L^{\infty} (\bn \otimes \mu)$ with 
\begin{equation}\label{boundsdiff}
\Vert \bw \Vert_{L^{\infty} (\bn \otimes \mu)} \le R_v+2. 
\end{equation}
\end{prop}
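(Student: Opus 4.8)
The plan is to test the subdifferential inequality \pref{sousdiffmap} with well-chosen perturbation fields $\bxi=(\xi^a)_{a\in A}$ and extract pointwise bounds on $w^a$. First I would compute the directional derivative of $J$ at $\bn$ in the direction $\bxi\in L^\infty(\bn\otimes\mu)$: since $J=\tfrac14 J_0+J_1$ with $J_0$ the self-interaction with the $|x-y|$ kernel and $J_1$ linear in $x$, one has, writing $\nu_\eps^a=(\id+\eps\xi^a)_\#\nu^a$ and $\bnu_\eps=\int_A\nu_\eps^a\,d\mu(a)$,
\[
\frac{d}{d\eps}\Big|_{\eps=0^+} J(\bn_\eps)=\frac12\int_{K^2}\sign(x-y)\,(\xi^a(x)-\xi^b(y))\,d(\bn\otimes\mu)(a,x)\,d(\bn\otimes\mu)(b,y)+\int_K\Big(\frac12-a\Big)\xi^a(x)\,d(\bn\otimes\mu)(a,x),
\]
where $\sign$ is understood appropriately at coincidence points; by symmetry in $(a,x)\leftrightarrow(b,y)$ the interaction term equals $\int_K\xi^a(x)\big(\int_A\sign(x-y)\,d\nu^b(y)\,d\mu(b)\big)\,d(\bn\otimes\mu)(a,x)$, and $\int_A\sign(x-y)\,d\rho(y)=\rho((-\infty,x))-\rho((x,+\infty))$ differs from $2G(x)-1$ and $2G^-(x)-1$ only at atoms of $\rho$. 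So, up to the atom ambiguity, the derivative is $\int_K\xi^a(x)\big(G(x)+G^-(x)-1+1-2a\big)\cdot\tfrac12\,d(\bn\otimes\mu)$. Care must be taken with one-sided derivatives because of the non-smoothness of $|\cdot|$ and the possible discontinuity of $G$; the cleanest route is to use the full convexity from Lemma \ref{convex} so that $\eps\mapsto J(\bn_\eps)$ is convex, hence $J(\bn_\eps)-J(\bn)\le\eps(J(\bt)-J(\bn))$, and then the subdifferential inequality \pref{sousdiffmap} gives $\int_K\bw\bxi\,d(\bn\otimes\mu)\le\liminf_{\eps\to0^+}\eps^{-1}(J(\bn_\eps)-J(\bn))$.

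Next I would localize. Fix $\delta>0$ and a Borel set $E\subset K$; take $\xi^a(x)=\delta\,\indi_E(a,x)$ and then $\xi^a(x)=-\delta\,\indi_E(a,x)$. Comparing $\int_K\bw\bxi\,d(\bn\otimes\mu)$ with the (one-sided) derivative bound obtained above, and letting $\delta\to0$, yields
\[
\int_E w^a(x)\,d(\bn\otimes\mu)(a,x)\le\int_E\big(G(x)-a\big)\,d(\bn\otimes\mu)(a,x),\qquad \int_E w^a(x)\,d(\bn\otimes\mu)\ge\int_E\big(G^-(x)-a\big)\,d(\bn\otimes\mu),
\]
the upper bound coming from the perturbation that pushes mass slightly to the right (so the relevant one-sided kernel value is $G(x)-a$) and the lower bound from pushing slightly to the left (kernel value $G^-(x)-a$). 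Here I need to check that for a rightward push $\xi^a\ge0$ the correct quantity is $\sum$-side $\int\sign(x-y+0)\,d\rho(y)$, i.e. $\rho((-\infty,x])-\rho((x,\infty))=2G(x)-1$, giving $\tfrac12(2G(x)-1)+\tfrac12-a=G(x)-a$, and symmetrically for the leftward push. Since $E$ is an arbitrary Borel subset of $K$, this gives \pref{localsd}: $w^a(x)\in[G^-(x)-a,\ G(x)-a]$ for $\bn\otimes\mu$-a.e.\ $(a,x)$. Finally, for $(a,x)$ in the support, $a\in[-R_v,R_v+1]$ and $G,G^-\in[0,1]$, so $|w^a(x)|\le|a|+1\le R_v+2$, which is \pref{boundsdiff}.

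The main obstacle is the one-sided differentiation of the interaction term $J_0$ at points where $\rho$ has atoms: there $|x-\cdot|$ is genuinely non-differentiable against $\rho$, the two one-sided slopes differ by the atom mass, and one must be careful that perturbing by $\eps\xi^a$ can move mass through an atom of $\rho$ at a rate comparable to $\eps$, so the naive interchange of limit and integral is not automatic. The remedy is to keep everything at the level of the convexity inequality from Lemma \ref{convex} (which holds without any smoothness), split the perturbation into its positive and negative parts, and use monotonicity of $\eps\mapsto\eps^{-1}(J(\bn_\eps)-J(\bn))$ together with dominated convergence (the integrands are uniformly bounded since supports are bounded) to pass to the limit; the resulting one-sided derivatives are exactly $\int_K\xi^a(x)(G(x)-a)\,d(\bn\otimes\mu)$ for $\xi^a\ge0$ and with $G$ replaced by $G^-$ for $\xi^a\le0$, which is what produces the closed interval in \pref{localsd}.
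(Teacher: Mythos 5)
Your overall strategy matches the paper's: perturb $\bn$ by $(\bid+\eps\bxi)_\#$, compute the one-sided derivative of $\eps\mapsto J(\bn_\eps)$, use the subdifferential inequality \pref{sousdiffmap}, and localize with indicator perturbations to obtain \pref{localsd}. The localization by $\xi^a=\pm\delta\indi_E$ for Borel $E\subset K$ is fine and gives the pointwise statement exactly as the paper's final step does.

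However, there is a genuine error in how you account for atoms of $\rho$. You assert that the one-sided derivative for a rightward push is \emph{exactly} $\int_K\xi^a(x)\,(G(x)-a)\,\mbox{d}(\bn\otimes\mu)$. This is not true, because $J_0$ is quadratic: at an atom $x_i$ of $\rho$ both copies of $\bn$ are being perturbed simultaneously, so the diagonal contribution (the paper's $I_2$) is $\tfrac14\int_{K^2}\indi_{x=y}|\xi^a(x)-\xi^b(y)|\,\mbox{d}(\bn\otimes\mu)\,\mbox{d}(\bn\otimes\mu)$, not what you get by differentiating $x\mapsto\int|x-y|\,\mbox{d}\rho(y)$ with $\rho$ frozen. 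Concretely, with $\xi^a=\delta\indi_E$, write $P_i=\int_A\indi_E(a,x_i)\nu^a(\{x_i\})\,\mbox{d}\mu(a)$ and $Q_i=\rho(\{x_i\})-P_i$; the true diagonal contribution at $x_i$ is $\tfrac\delta2 P_iQ_i$, whereas your ``exact'' formula would give $\tfrac\delta2 P_i(P_i+Q_i)$. These differ whenever $P_i>0$. What \emph{is} true is the one-sided inequality $I_2\le\tfrac12\int_K|\xi^a(x)|(G(x)-G^-(x))\,\mbox{d}(\bn\otimes\mu)$, which the paper establishes via a Fubini argument using $|\xi^a(x)-\xi^b(x)|\le|\xi^a(x)|+|\xi^b(x)|$ and $\int_A\nu^b(\{x\})\,\mbox{d}\mu(b)=G(x)-G^-(x)$. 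Since the subdifferential inequality only needs an \emph{upper} bound on the one-sided derivative, your conclusion is still reachable, but you must prove this bound rather than assert equality; your argument as written has no justification for the crucial step $D_{\text{atom}}\le\tfrac12\int|\xi|(G-G^-)$. Once that estimate is in place, the rest of your plan (localization, letting $E$ run over Borel sets, the crude $L^\infty$ bound $|w^a(x)|\le|a|+1\le R_v+2$) is correct and coincides with the paper's proof.
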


\begin{proof}
Let $\bxi \in L^{\infty} (\bn \otimes \mu)$ and define $\bn_\eps:=(\bid+\eps \bxi)_\# \bn$ for $\eps\in[0,1]$. Since $\bw\in \partial J(\bn)$ we have in particular
\begin{equation}\label{sd1}
\lim_{\eps \to 0^+} \frac{1}{\eps} (J(\bn_\eps)-J(\bn)) \ge \int_K \bw \bxi   \mbox{d}(\bn\otimes \mu)= \int_K w^a (x) \xi^a(x) \mbox{d} \nu^a(x) \mbox{d} \mu(a). 
\end{equation}
Defining $J_0$ and $J_1$ as in \pref{J0}-\pref{J1} and $K:=A\times [-R, R]$ , first we have
\begin{equation}\label{sd2}
\frac{1}{\eps} (J_1(\bn_\eps)-J_1(\bn)) =I_0:= \int_K \Big(\frac{1}{2}-a\Big) \xi^a(x) \mbox{d} \nu^a(x) \mbox{d} \mu(a). 
\end{equation}
We then write
\begin{equation}\label{sd3}
\frac{1}{\eps} (J_0(\bn_\eps)-J_0(\bn)) = \int_{K\times K} \eta_\eps(a,b,x, y) \mbox{d}( \bn\otimes \mu)(a,x) \mbox{d}( \bn\otimes \mu)(b,y)
\end{equation}
with
\begin{equation}\label{sd4}
 \eta_\eps(a,b,x, y) =\frac{1}{\eps} \Big( \vert x +\eps \xi^a (x)-(y+\eps \xi^b(y))\vert - \vert x-y\vert \Big). 
\end{equation}
Observing that $\eta_\eps$ is bounded by $2 \Vert \bxi\Vert_{L^{\infty} (\bn \otimes \mu)}$ and that 
\begin{equation}\label{sd5}
\lim_{\eps \to 0^+} \eta_\eps(a,b,x, y)=\begin{cases} \sign(x-y) (\xi^a(x)-\xi^b(y)), \mbox{ if $x\neq y$} \\  \vert \xi^a(x)-\xi^b(y)\vert , \mbox{ if $x=y$,}   \end{cases}
\end{equation}
by Lebesgue's dominated convergence theorem, we get 
\begin{equation}\label{sd6}
\lim_{\eps \to 0^+} \frac{1}{\eps} (J(\bn_\eps)-J(\bn))=I_0+I_1+I_2
\end{equation}
with $I_0$ given by \pref{sd2}, and 
\begin{equation}\label{sd7}
I_1=\frac{1}{4} \int_{K\times K}   \indi_{x\neq y}\sign(x-y)  (\xi^a(x)-\xi^b(y))  \mbox{d}( \bn\otimes \mu)(a,x) \mbox{d}( \bn\otimes \mu)(b,y) 
\end{equation}
and 
\begin{equation}\label{sd8}
I_2=\frac{1}{4} \int_{K\times K}  \indi_{x=y}  \vert \xi^a(x)-\xi^b(x)\vert  \mbox{d}( \bn\otimes \mu)(a,x) \mbox{d}( \bn\otimes \mu)(b,y). 
\end{equation}
To compute $I_1$ we observe that thanks to Fubini's theorem
\[\begin{split}
&\frac{1}{4} \int_{K\times K}  \indi_{x>y}  (\xi^a(x)-\xi^b(y))  \mbox{d}( \bn\otimes \mu)(a,x) \mbox{d}( \bn\otimes \mu)(b,y) \\
&= \frac{1}{4} \int_{K}   \xi^a(x) G^{-}(x) \mbox{d}( \bn\otimes \mu)(a,x) -\frac{1}{4} \int_{K}   \xi^b(y) (1-G(y)) \mbox{d}( \bn\otimes \mu)(b,y)\\
&= \frac{1}{4} \int_{K}   \xi^a(x) (G^-(x)+G(x)-1) \mbox{d}( \bn\otimes \mu)(a,x).
\end{split}\]
Treating similarly the integral on $\{x<y\}$ we thus get
\begin{equation}\label{sd9}
I_1= \int_{K}   \Big(\frac{G^-(x)+G(x)}{2}-\frac{1}{2}\Big)  \xi^a(x) \mbox{d}( \bn\otimes \mu)(a,x).
\end{equation}
As for $I_2$, we have
\begin{equation}\label{sd10}
I_2\le \frac{1}{4} \int_{A\times A} \Big( \int_{[-R, R]}  \Big( \vert \xi^a(x)\vert +\vert \xi^b(x)\vert \Big) \nu^b(\{x\}) \mbox{d} \nu^a(x)  \Big) \mbox{d} \mu(a)  \mbox{d} \mu(b),
\end{equation}
then we use Fubini's theorem to get
\[\begin{split} \int_{A\times A} \Big( \int_{[-R, R]}  \vert \xi^a(x)\vert  \nu^b(\{x\}) \mbox{d} \nu^a(x)  \Big) \mbox{d} \mu(a)  \mbox{d} \mu(b)\\
=\int_K \vert \xi^a (x)\vert (G(x)-G^-(x))   \mbox{d}( \bn\otimes \mu)(a,x).
\end{split}\]
Note that in the previous integral, the integration with respect to $x$ is actually a discrete sum, because the set of atoms where $G>G^{-}$ is at most countable since $G$ is nondecreasing; let us denote this set by
\[S:=\{x\in [-R, R] \; : \; G(x)-G^{-}(x)>0\}=\{x_i\}_{i\in I}\]
where $I$ is at most countable. Similarly  for the second term in the right hand side of \pref{sd10} 
observing that $\vert \xi^b(x)\vert  \int_A  \nu^b(\{x\})   \mbox{d} \mu(b)  \le  \Vert \bxi\Vert_{L^{\infty} (\bn \otimes \mu)} (G(x)-G^{-}(x))$, we only have to integrate in $x$ over $S$ which gives
\[\begin{split} 
&\int_{A\times A} \Big( \int_{[-R, R]}  \vert \xi^b(x)\vert  \nu^b(\{x\}) \mbox{d} \nu^a(x)  \Big) \mbox{d} \mu(a)  \mbox{d} \mu(b)\\
&=\int_{A\times A} \Big( \sum_{i\in I}  \vert \xi^b(x_i)\vert  \nu^b(\{x_i\}) \nu^a(\{x_i\})  \Big) \mbox{d} \mu(a)  \mbox{d} \mu(b)\\
&=\int_{A} \Big( \sum_{i\in I}  \vert \xi^b(x_i)\vert  \nu^b(\{x_i\}) (G(x_i)-G^{-}(x_i)  \Big)  \mbox{d} \mu(b)\\
&=\int_K \vert \xi^b (x)\vert (G(x)-G^-(x))   \mbox{d}( \bn\otimes \mu)(b,x),
\end{split}\]
so that 
\begin{equation}\label{sd11}
I_2\le \frac{1}{2} \int_K \vert \xi^a (x)\vert (G(x)-G^-(x))   \mbox{d}( \bn\otimes \mu)(a,x). 
\end{equation}
Putting together \pref{sd1}, \pref{sd2}, \pref{sd6}, \pref{sd9} and \pref{sd11} we arrive at the inequality
\[\begin{split} 
  \int_K \Big(w^a (x)+a-\frac{1}{2}(G(x)+G^-(x))\Big)   \xi^a(x)   \mbox{d} (\bn\otimes \mu)(a,x)\\ \le  \frac{1}{2} \int_K \vert \xi^a (x)\vert (G(x)-G^-(x))   \mbox{d}( \bn\otimes \mu)(a,x)   
\end{split}\]
which holds for any $\bxi \in L^{\infty} (\bn \otimes \mu)$ and \pref{localsd} obviously follows.

\end{proof}

\begin{defi}\label{defgf}
A gradient flow of $J$ on the time interval $[0,T]$ starting from $\bn_0$ is a Lipschitz continuous (for $d$) curve $t\in [0,T]\mapsto \bn(t)=(\nu(t)^a)_{a\in A} \in X_R$ together with a measurable map $t\in [0,T]\mapsto \bv(t) \in  L^1(\bn \otimes \mu)$ such that $\bv(t) \in -\partial J(\bn(t))$ for almost every $t\in[0,T]$, and for $\mu$-almost every $a\in A$, $t\mapsto \nu(t)^a$ is a solution in the sense of distributions of the continuity equation \pref{s1}.
\end{defi}

It then follows from Proposition \ref{linksys} that gradient flows starting from $\bn_0$ are measure solutions of the system \pref{s1}-\pref{s2}-\pref{s3}. Note also that thanks to the bound \pref{boundsdiff}, gradient flows are not only absolutely continuous but automatically  Lipschitz for $d$ and even more is true: for $\mu$-almost every $a$, the curve $t\mapsto \nu_t^a$ is Lipschitz for $W_2$, more precisely
\begin{equation}\label{lipent}
W_2(\nu_t^a, \nu_s^a)\le \vert t-s\vert (R_v+2)  h(a)^{1/2} \mbox{ hence } d(\bn(t), \bn(s))\le \vert t-s\vert (R_v+2).
\end{equation}

\section{Existence by the JKO scheme}\label{sec-jko}

We will prove existence of a gradient flow curve on the time interval $[0,T]$ starting from $\bn_0=(\nu_0^a)_{a\in A}$ by considering the JKO scheme. Given a time step $\tau>0$, starting from $\bn_0$, we construct inductively a sequence $\bn_k$ by 
\begin{equation}\label{jkoscheme}
\bn_{k+1}\in \argmin_{\bn \in X} \Big\{\frac{1}{2\tau} d^2(\bn, \bn_k)+J(\bn)\Big\}
\end{equation}
for $k=0, \cdots, N$ with $N:=[\frac{T}{\tau}]$.

\subsection{Estimates}

The first step in proving that this scheme is well-defined consists in showing that one can a priori bound the support. This is based on the following basic observation:

\begin{lem}\label{boundsupport}
Let $R_0$, $R>0$ and $\tau$ be positive constants, $\nu_0$ be a probability measure on $\R^d$ with support in $B_{R_0}$ and $\nu \in \PP_2(\R^d)$. Let $P$ be the projection onto $B_{R_0+\tau R}$ and define $\hnu:=P_\# \nu$. Then, for every $a\in B_R$, one has
\[\frac{1}{2} W_2^2(\hnu, \nu_0)-\tau \int_{\R^d}  a \cdot x \mbox{d} \hnu(x) \le \frac{1}{2} W_2^2(\nu, \nu_0)-\tau \int_{\R^d}  a \cdot x \mbox{d} \nu(x).\]

\end{lem}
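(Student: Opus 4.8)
The key point is that projecting onto $B_{R_0+\tau R}$ can only decrease the transport cost to $\nu_0$, while the linear term $-\tau\int a\cdot x$ changes in a controlled way. I would first compare the transport terms. Since $\nu_0$ is supported in $B_{R_0}\subset B_{R_0+\tau R}$, the projection $P$ onto the (closed, convex) ball $B_{R_0+\tau R}$ fixes $\spt(\nu_0)$ pointwise, and $P$ is $1$-Lipschitz. If $\gamma\in\Pi(\nu,\nu_0)$ is optimal for $W_2$, then $(P\times\id)_\#\gamma\in\Pi(\hnu,\nu_0)$ and
\[
W_2^2(\hnu,\nu_0)\le \int_{\R^d\times\R^d}\abs{P(x)-y}^2\,d\gamma(x,y)\le \int_{\R^d\times\R^d}\abs{x-y}^2\,d\gamma(x,y)=W_2^2(\nu,\nu_0),
\]
using $\abs{P(x)-y}=\abs{P(x)-P(y)}\le\abs{x-y}$ for $y\in B_{R_0}$. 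This already gives $\tfrac12 W_2^2(\hnu,\nu_0)\le\tfrac12 W_2^2(\nu,\nu_0)$.

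**The linear term.** Next I would control $-\tau\int a\cdot x\,d\hnu-\bigl(-\tau\int a\cdot x\,d\nu\bigr)=\tau\int a\cdot(x-P(x))\,d\nu(x)$. The crucial geometric fact is that for $x\notin B_{R_0+\tau R}$ one has $P(x)=\frac{R_0+\tau R}{\abs{x}}x$, so $x-P(x)=\bigl(1-\frac{R_0+\tau R}{\abs{x}}\bigr)x$ points in the direction of $x$ with nonnegative coefficient; hence for $\abs{a}\le R$,
\[
a\cdot(x-P(x))\le \abs{a}\,\abs{x-P(x)}=R\Bigl(\abs{x}-(R_0+\tau R)\Bigr)\le R\bigl(\abs{x}-\abs{P(x)}\bigr).
\]
On the other hand, one must still pay for the transport: I would combine this with the transport comparison more carefully rather than using it separately. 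Indeed the honest route is to estimate the full functional difference at once. Writing $D:=\bigl(\tfrac12 W_2^2(\nu,\nu_0)-\tau\int a\cdot x\,d\nu\bigr)-\bigl(\tfrac12 W_2^2(\hnu,\nu_0)-\tau\int a\cdot x\,d\hnu\bigr)$, and using the optimal $\gamma$ above,
\[
D\ge \tfrac12\int\bigl(\abs{x-y}^2-\abs{P(x)-y}^2\bigr)d\gamma(x,y)-\tau\int a\cdot(x-P(x))\,d\nu(x),
\]
and on $\{x-P(x)\ne 0\}$ one has $y\in B_{R_0}$ with $\abs{x-y}\ge\abs{P(x)-y}+\abs{x-P(x)}$ (since $P(x)$ lies on the segment... more precisely since $\langle x-P(x),P(x)-y\rangle\ge 0$ by the obtuse-angle/variational characterization of the projection), so $\abs{x-y}^2-\abs{P(x)-y}^2\ge\abs{x-P(x)}^2+2\abs{x-P(x)}\,\abs{P(x)-y}\ge \abs{x-P(x)}\cdot 2\tau R$ because $\abs{P(x)-y}\ge (R_0+\tau R)-R_0=\tau R$ whenever $x\notin B_{R_0+\tau R}$ and $y\in B_{R_0}$ — wait, that needs $\abs{P(x)}-\abs{y}\ge\tau R$, i.e.\ $\abs{P(x)}=R_0+\tau R$ and $\abs y\le R_0$, which holds. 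Thus $\tfrac12(\abs{x-y}^2-\abs{P(x)-y}^2)\ge \tau R\,\abs{x-P(x)}\ge\tau\, a\cdot(x-P(x))$ pointwise on that set (and both sides vanish off it), whence $D\ge 0$.

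**Main obstacle.** The delicate point — and the only place care is needed — is the pointwise inequality $\abs{x-y}^2-\abs{P(x)-y}^2\ge 2\tau R\,\abs{x-P(x)}$ for $x\notin B_{R_0+\tau R}$, $y\in B_{R_0}$: it hinges on both the variational inequality $\langle x-P(x),\,z-P(x)\rangle\le 0$ for $z\in B_{R_0+\tau R}$ (applied with $z=y$) and the separation estimate $\abs{P(x)-y}\ge\tau R$. I would state these two facts as the backbone of the argument; everything else (the measurable-selection-free choice of optimal $\gamma$, $(P\times\id)_\#\gamma$ being admissible, dominated convergence is not even needed) is routine. Note the statement is for $a\in B_R$ and measures on $\R^d$, so no one-dimensionality is used here.
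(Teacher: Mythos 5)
There is a genuine gap. The pointwise estimate on which your argument pivots, namely
\[
\abs{x-y}^2-\abs{P(x)-y}^2\ \ge\ \abs{x-P(x)}^2+2\,\abs{x-P(x)}\,\abs{P(x)-y},
\]
does not follow from the variational characterization of the projection, and it is \emph{false} in dimension $d\ge 2$. Expanding,
\[
\abs{x-y}^2-\abs{P(x)-y}^2=\abs{x-P(x)}^2+2\,\langle x-P(x),\,P(x)-y\rangle,
\]
so your inequality amounts to $\langle x-P(x),\,P(x)-y\rangle\ge \abs{x-P(x)}\,\abs{P(x)-y}$, i.e.\ a \emph{reverse} Cauchy--Schwarz inequality, which holds only when $x-P(x)$ and $P(x)-y$ are positively parallel (equivalently, when $P(x)$ lies on the segment $[x,y]$). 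The variational inequality $\langle x-P(x),\,z-P(x)\rangle\le0$ for $z\in B_{R_0+\tau R}$ gives $\langle x-P(x),\,P(x)-y\rangle\ge 0$, nothing more; it certainly does not give $\abs{x-y}\ge\abs{P(x)-y}+\abs{x-P(x)}$. A concrete counterexample in $\R^2$ with $R_0=1$, $\tau R=1$: take $x=(3,0)$, so $P(x)=(2,0)$, and $y=(0,1)\in B_{R_0}$; then $\abs{x-y}^2-\abs{P(x)-y}^2=10-5=5$, while $\abs{x-P(x)}^2+2\abs{x-P(x)}\abs{P(x)-y}=1+2\sqrt5\approx5.47$. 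Your closing remark that ``no one-dimensionality is used here'' is exactly where the issue lies: the collinearity your step implicitly requires does hold in $d=1$ (the projection onto an interval from outside lands at an endpoint, with $y$ on the far side), but fails in higher dimensions.

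The fix, which is the paper's route, is not to decouple the quadratic and linear terms but to absorb the linear term into the projection inequality. Since $y\in B_{R_0}$ and $\abs{a}\le R$, the shifted point $y+\tau a$ lies in $B_{R_0+\tau R}$; applying the variational characterization with $z=y+\tau a$ gives $\langle x-P(x),\,P(x)-y-\tau a\rangle\ge 0$, hence pointwise
\[
\tfrac12\bigl(\abs{x-y}^2-\abs{P(x)-y}^2\bigr)-\tau\,a\cdot\bigl(x-P(x)\bigr)
=\tfrac12\abs{x-P(x)}^2+\langle x-P(x),\,P(x)-y-\tau a\rangle\ \ge\ 0,
\]
which integrates against the optimal plan to give $D\ge 0$ directly (the paper phrases this with $\gamma\in\Pi(\nu_0,\nu)$ and the map $(x,y)\mapsto(x,P(y))$, but it is the same computation). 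The auxiliary bound $\abs{P(x)-y}\ge\tau R$ you invoke is true but unnecessary once one projects at $y+\tau a$ rather than at $y$.
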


\begin{proof}
Fix an optimal transport plan between $\nu_0$ and $\nu$ i.e. a $\gamma\in \Pi(\nu_0, \nu)$ such that $W_2^2(\nu, \nu_0)=\int_{\R^d\times \R^d} \vert x-y \vert^2 \mbox{d} \gamma(x,y)$. Since the map $(x,y)\mapsto (x, P(y))$ pushes forward $\gamma$ to a plan having $\nu_0$ and $\hnu$ as marginals, we have
\[\begin{split}
\frac{1}{2} W_2^2(\hnu, \nu_0) \le \frac{1}{2}  \int_{\R^d\times \R^d} \vert x-P(y)\vert^2 \mbox{d} \gamma(x,y)= \frac{1}{2} W_2^2(\nu, \nu_0) \\
-\frac{1}{2} \int_{\R^d\times \R^d} \vert y-P(y)\vert^2 \mbox{d} \gamma(x,y) +  \int_{\R^d\times \R^d} (y-P(y)) \cdot(x-P(y)) \mbox{d} \gamma(x,y)
\end{split}\]
and then
\[\begin{split}\frac{1}{2} W_2^2(\hnu, \nu_0)-\tau \int_{\R^d}  a \cdot x \mbox{d} \hnu(x) -\frac{1}{2} W_2^2(\nu, \nu_0)+\tau \int_{\R^d}  a \cdot x \mbox{d} \nu(x)\\
\le \int_{\R^d\times \R^d} (y-P(y)) \cdot(x+\tau a-P(y)) \mbox{d} \gamma(x,y).
\end{split}\]
But since $\gamma$-a.e. $x+\tau a \in B_{R_0+\tau R}$, we get that the integrand in the right-hand side is nonpositive  by the well-known characterization of the projection onto $B_{R_0+\tau R}$. 
\end{proof}  

Now consider the first step of the JKO scheme. Since $\nu_0^a$ is supported by $[-R_x, R_x]$, for every $a\in A$ and $a\in A\Rightarrow \vert a \vert \leq R_v +1$, the previous lemma implies that if one replaces $\bn=(\nu^a)_{a\in A}\in X$ by $\hat{\bn}=(\hat{\nu}^a)_{a\in A}$ defined for every $a$ by $\hat{\nu}^a=P_\#\nu^a$ where $P$ is the projection on $[-R_x-\tau (R_v+3/2), R_x+\tau (R_v+3/2)]$, one has
\[\frac{1}{2} W_2^2(\hnu^a, \nu^a_0)+\tau \int_{\R^d} \left(\frac{1}{2}-a\right) \cdot x \mbox{d} \hnu^a(x) \le \frac{1}{2} W_2^2(\nu^a, \nu^a_0)+\tau \int_{\R^d}  \left(\frac{1}{2}-a\right) \cdot x \mbox{d} \nu^a(x).\]
As for the interaction term, it is also improved by replacing $\bn$ by  $\hat{\bn}$; this is obvious from the expression \pref{expandint} and the fact that $P$ is $1$-Lipschitz. In the first step of the JKO scheme, we may therefore impose the constraint that $\bn\in X_{R_x+\tau (R_v+3/2)}$. After $k$ steps, we may similarly impose that the minimization is performed on $X_{R_x+k \tau (R_v+3/2)}$, so simply setting
\begin{equation}\label{choixduR}
R=R_x+(T+\tau) (R_v+3/2),
\end{equation}
we may replace \pref{jkoscheme} with a bound on the support:
\begin{equation}\label{jkoschemeR}
\bn_{k+1}\in \argmin_{\bn \in X_R} \Big\{\frac{1}{2\tau} d^2(\bn, \bn_k)+J(\bn)\Big\}.
\end{equation}

By a direct application of Lemma \ref{basic} and the compactness of $(X_R, d_w)$, we then see that  the minimizing scheme \pref{jkoschemeR} is well-defined and actually defines a sequence $\bn_k$, $k=0,\cdots, N+1$. We also extend this sequence by piecewise constant in time interpolation:
\begin{equation}\label{interpol}
\bn_{\tau}(t):=\bn_k, \mbox{ for $t\in((k-1)\tau, k\tau]$}, \; k=1, \cdots, N+1. 
\end{equation}

In the following basic estimates, $C$ will denote a constant (possibly depending on $T$) which may vary from one line to the other.  By construction, for all $k=0,\ldots, N$, we have
\begin{equation}\label{estim1}
\frac{1}{2\tau} d^2(\bn_{k+1}, \bn_k)\le J(\bn_k)-J(\bn_{k+1}).
\end{equation}
Summing and using the fact that every $\bn_k$ belongs to $X_R$ and that $J$ is bounded from below on $X_R$ we get:
\begin{equation}\label{estim2}
\frac{1}{2\tau} \sum_{k=0}^N d^2(\bn_{k+1}, \bn_k)\le J(\bn_0)-J(\bn_{N+1})\le C.
\end{equation}
From \pref{estim2}, Cauchy-Schwarz inequality and Lemma \ref{basic} we classically get a uniform H\"older estimate:
\begin{equation}\label{estim3}
d_w(\bn_\tau(t), \bn_\tau(s))\le d(\bn_\tau(t), \bn_\tau(s))\le C \sqrt{ \vert t-s\vert +\tau}, \; \forall (s,t)\in [0, T]^2.  
\end{equation}
Since $(X_R, d_w)$ is a compact metric space, it follows from some refined variant of Ascoli-Arzel\`a theorem (see \cite{AGS}) that there exists a limit curve 
\[t\mapsto \bn(t) \mbox{ belonging to $C^{0,\frac{1}{2}}([0,T], (X_R, d_w))$}\]
and a vanishing sequence of time-steps $\tau_n \to 0$ as $n\to+\infty$ such that 
\begin{equation}\label{estim4}
\sup_{t\in [0,T]} d_w(\bn_{\tau_n}(t), \bn(t)) \to 0 \mbox{ as $n\to +\infty$}.   
\end{equation}

\subsection{Discrete Euler-Lagrange equation}

Let $\bg_{k+1}=(\gamma_{k+1}^a)_{a\in A} \in \Pi(\bn_{k}, \bn_{k+1})$ be such that $\gamma^a_{k+1}$ is an optimal plan for $\mu$-almost every $a$ and let $v_{k+1}^a$ be defined by
\[\int_{[-R,R]} \xi(y) v_{k+1}^a(y) \mbox{d} \nu_{k+1}^a (y)= \int_{[-R,R]^2} \xi(y)\frac{y-x}{\tau} \mbox{d} \gamma_{k+1}^a(x,y)\]
for all $\xi \in C([-R, R])$, or equivalently, disintegrating $\gamma_{k+1}^a$ with respect to its second marginal $\nu_{k+1}^a$ as $\mbox{d} \gamma_{k+1}^a(x,y)= \mbox{d} \gamma_{k+1}^{a,y}(x)  \otimes \mbox{d} \nu_{k+1}^a(y)$: 
\begin{equation}\label{defvk}
 v_{k+1}^a(y)=\frac{1}{\tau}\Big( y-\int_{[-R,R]}x  \mbox{d} \gamma_{k+1}^{a,y}(x)\Big).
\end{equation}

The Euler-Lagrange equation for \pref{jkoscheme} can then be written as
\begin{lem}\label{lem-el}
Let $\bn_{k+1}$ be a solution of \pref{jkoscheme},  $\bg_{k+1} \in \Pi(\bn_{k}, \bn_{k+1})$ and $\bv_{k+1}$ be constructed as above, then:
\begin{equation}\label{el}
\bv_{k+1}\in -\partial J (\bn_{k+1}).
\end{equation}
\end{lem}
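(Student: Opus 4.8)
The plan is to establish the variational inequality defining $-\partial J(\bn_{k+1})$ by exploiting the optimality of $\bn_{k+1}$ in \pref{jkoscheme} against competitors built from transport plans. Let $R'>0$, $\bt\in X_{R'}$ and $\bg=(\gamma^a)_{a\in A}\in\Pi(\bn_{k+1},\bt)$ be arbitrary, and for $\eps\in[0,1]$ set $\bn_\eps:=(((1-\eps)\pi_1+\eps\pi_2)_\#\gamma^a)_{a\in A}$ as in \pref{interpp}, so that $\bn_0=\bn_{k+1}$ and $\bn_1=\bt$. First I would check that $\bn_\eps$ is an admissible competitor, i.e. that it lies in $X$ (its supports are in $[-\max(R,R'),\max(R,R')]$ and the mass/second-moment constraints are preserved because $\bn_\eps\otimes\mu$ is a pushforward of $\bn_{k+1}\otimes\mu$ through a bounded map). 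Plugging $\bn_\eps$ into \pref{jkoschemeR} and using that $\bn_{k+1}$ is a minimizer gives
\[
\frac{1}{2\tau}d^2(\bn_{k+1},\bn_k)+J(\bn_{k+1})\le\frac{1}{2\tau}d^2(\bn_\eps,\bn_k)+J(\bn_\eps),
\]
so that
\[
\frac{1}{2\tau}\big(d^2(\bn_\eps,\bn_k)-d^2(\bn_{k+1},\bn_k)\big)+J(\bn_\eps)-J(\bn_{k+1})\ge 0.
\]
By the convexity Lemma \ref{convex}, $J(\bn_\eps)-J(\bn_{k+1})\le\eps(J(\bt)-J(\bn_{k+1}))$, so after dividing by $\eps$ and letting $\eps\to0^+$ it suffices to show that
\[
\limsup_{\eps\to0^+}\frac{1}{2\tau\eps}\big(d^2(\bn_\eps,\bn_k)-d^2(\bn_{k+1},\bn_k)\big)
\le-\int_{[-R,R]\times[-R',R']\times A}v_{k+1}^a(y)(z-y)\,\mathrm{d}\gamma^a(y,z)\,\mathrm{d}\mu(a),
\]
which combined with the previous display yields exactly the subdifferential inequality of Definition \ref{defsousdiff} for $\bv_{k+1}$.

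The core of the argument is therefore the derivative estimate for the squared distance term. Since $d^2(\bn_\eps,\bn_k)=\int_A W_2^2(\nu_\eps^a,\nu_k^a)\,\mathrm{d}\mu(a)$, I would work fiberwise in $a$. Fix the optimal plan $\gamma_{k+1}^a\in\Pi(\nu_k^a,\nu_{k+1}^a)$ used to define $v_{k+1}^a$ in \pref{defvk}, and the plan $\gamma^a\in\Pi(\nu_{k+1}^a,\theta^a)$. Glue them along their common marginal $\nu_{k+1}^a$ to obtain a measure $\sigma^a$ on $\R^3$ with marginals $(\nu_k^a,\nu_{k+1}^a,\theta^a)$ on the three coordinates $(x,y,z)$. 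Then $((1-\eps)y+\eps z)$ transported from $x$ via $\sigma^a$ is an admissible (not necessarily optimal) competitor for $W_2^2(\nu_\eps^a,\nu_k^a)$, so
\[
W_2^2(\nu_\eps^a,\nu_k^a)\le\int_{\R^3}\big|x-(1-\eps)y-\eps z\big|^2\,\mathrm{d}\sigma^a(x,y,z),
\]
while $W_2^2(\nu_{k+1}^a,\nu_k^a)=\int_{\R^3}|x-y|^2\,\mathrm{d}\sigma^a$. Expanding the square,
\[
\big|x-(1-\eps)y-\eps z\big|^2-|x-y|^2=2\eps(y-x)(y-z)+\eps^2|y-z|^2,
\]
so dividing by $\eps$ and letting $\eps\to0^+$ (the $\eps^2$ term vanishes since all supports are bounded) gives
\[
\limsup_{\eps\to0^+}\frac{W_2^2(\nu_\eps^a,\nu_k^a)-W_2^2(\nu_{k+1}^a,\nu_k^a)}{\eps}\le 2\int_{\R^3}(y-x)(y-z)\,\mathrm{d}\sigma^a(x,y,z).
\]
Now disintegrating $\sigma^a$ with respect to the $y$-marginal $\nu_{k+1}^a$ and recalling $v_{k+1}^a(y)=\frac{1}{\tau}(y-\int x\,\mathrm{d}\gamma_{k+1}^{a,y}(x))$, the right-hand side equals $2\tau\int v_{k+1}^a(y)(y-z)\,\mathrm{d}\gamma^a(y,z)$; dividing by $2\tau$ and integrating in $a$ against $\mu$ (using dominated convergence, justified by the uniform support bounds) produces precisely the bound claimed above, after noting $(y-z)=-(z-y)$.

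The main obstacle I anticipate is the interchange of limit and integral in $a$: one needs the difference quotients $\tfrac{1}{\eps}(W_2^2(\nu_\eps^a,\nu_k^a)-W_2^2(\nu_{k+1}^a,\nu_k^a))$ to be dominated uniformly in $\eps$ by an $L^1(\mu)$ function of $a$, which follows from the compact support constraint $\bn\in X_R$ (everything lives in $[-R,R]$, competitors in $[-R',R']$, so the quotients are bounded by a constant times $h(a)$ which is $\mu$-integrable with mass one), plus a measurable-selection argument to pick $\gamma_{k+1}^a$ and $\gamma^a$ $\mu$-measurably in $a$ — exactly the kind of selection already invoked in the proof of Lemma \ref{basic} via \cite{cv}. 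A secondary subtlety is that $\limsup$ of an integral is at most the integral of the $\limsup$ only with a domination hypothesis (reverse Fatou), so the domination step is genuinely needed rather than cosmetic; with it in hand the chain of inequalities closes and \pref{el} follows.
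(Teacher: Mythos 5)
Your proof follows exactly the paper's approach: glue the optimal plan $\gamma_{k+1}^a\in\Pi(\nu_k^a,\nu_{k+1}^a)$ used to define $\bv_{k+1}$ and the arbitrary plan $\gamma^a\in\Pi(\nu_{k+1}^a,\theta^a)$ along their common marginal $\nu_{k+1}^a$ to obtain a $3$-plan on $(x,y,z)$ (your $\sigma^a$ is the paper's $\beta^a$), push it forward through $(\pi_1,(1-\eps)\pi_2+\eps\pi_3)$ to get a competitor for $W_2(\nu_\eps^a,\nu_k^a)$, expand the square, and integrate in $a$; your remark about dominated convergence/reverse Fatou is sound and matches the paper's invocation of Lebesgue dominated convergence, as is the measurable-selection caveat.

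However, there is a sign error that propagates through the final steps. Writing $x-(1-\eps)y-\eps z=(x-y)+\eps(y-z)$, the correct expansion is
\[
\abs{x-(1-\eps)y-\eps z}^2-\abs{x-y}^2 = 2\eps(x-y)(y-z)+\eps^2\abs{y-z}^2,
\]
not $2\eps(y-x)(y-z)+\eps^2\abs{y-z}^2$. With the correct algebra, the competitor bound gives
\[
\limsup_{\eps\to0^+}\frac{1}{2\tau\eps}\bigl(d^2(\bn_\eps,\bn_k)-d^2(\bn_{k+1},\bn_k)\bigr)\le +\int v_{k+1}^a(y)(z-y)\,\mathrm{d}\gamma^a(y,z)\,\mathrm{d}\mu(a),
\]
without the minus sign; this is the inequality actually needed, since combined with the optimality display and $J(\bt)-J(\bn_{k+1})\ge-\liminf_\eps\frac{1}{2\tau\eps}(\cdots)$ it yields $J(\bt)-J(\bn_{k+1})\ge-\int v_{k+1}^a(y)(z-y)\,\mathrm{d}\gamma^a\,\mathrm{d}\mu$, i.e. $\bv_{k+1}\in-\partial J(\bn_{k+1})$. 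As written, your two sign errors (in the square expansion and in the stated target) are mutually consistent so the internal arithmetic closes, but followed literally the chain would conclude $J(\bt)-J(\bn_{k+1})\ge+\int v_{k+1}^a(y)(z-y)\,\mathrm{d}\gamma^a\,\mathrm{d}\mu$, i.e. $\bv_{k+1}\in+\partial J(\bn_{k+1})$, which is not the lemma. Flipping both signs fixes the proof and makes it coincide with the paper's.
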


\begin{proof}
Let $R'>0$, $\bt\in X_{R'}$ and $\bg\in \Pi(\bn^{k+1}, \bt)$, and define for $\eps\in [0,1]$ 
\[\bn_\eps= (\nu_\eps^a)_{a\in A} \mbox{ with } \nu_\eps^a :=((1-\eps) \pi_1+\eps\pi_2)_\#\gamma^a.\]
Then by optimality of $\bn_{k+1}$ and using Lemma \ref{convex}, we have
\[\begin{split}
0\le \liminf_{\eps \to 0^+} \frac{1}{\eps} \Big(  \frac{1}{2\tau} (d^2(\bn_\eps, \bn_k)-d^2(\bn_{k+1}, \bn_k)) + J(\bn_\eps)-J(\bn_{k+1})  \Big)\\
\le \liminf_{\eps \to 0^+} \frac{1}{\eps} \Big(  \frac{1}{2\tau} (d^2(\bn_\eps, \bn_k)-d^2(\bn_{k+1}, \bn_k) \Big) +J(\bt)-J(\bn_{k+1}).
\end{split}\]
We have already disintegrated the optimal plan $\gamma_{k+1}^a $ between $\nu_k^a$ and $\nu_{k+1}^a$ as
\[     \gamma_{k+1}^a (\mbox{d}x,\mbox{d} y)= \gamma_{k+1}^{a,y} ( \mbox{d}x) \otimes \nu_{k+1}^a  (\mbox{d}y). \]
Let us also disintegrate the (arbitrary) plan $\gamma^a $ between $\nu_{k+1}^a$ and $\theta^a$ as:
\[\gamma^a (\mbox{d}y,\mbox{d} z)= \nu_{k+1}^a  (\mbox{d}y)  \otimes \gamma^{a,y} (\mbox{d}z).\] 
Define then the $3$-plan $\beta^a$ by $\beta^a =( \gamma_{k+1}^{a,y} \otimes \gamma^{a,y}  )\otimes \nu_{k+1}^a$ i.e. 
\[\int_{\R^3} \phi(x,y,z) \mbox{d} \beta^a (x,y,z):=\int_{\R} \Big( \int_{\R^2} \phi(x,y,z)  \mbox{d} \gamma_{k+1}^{a,y}(x) \mbox{d} \gamma^{a,y}(z)   \Big) \mbox{d} \nu_{k+1}^a (y) \]
for every $\phi \in C(\R^3)$. 
Setting 
\[\begin{split}
(\pi_1(x,y,z), \pi_2(x,y,z), \pi_3(x,y,z))=(x,y,z), \\
(\pi_{12}(x,y,z), \pi_{23} (x,y,z), \pi_{13}(x,y,z))=( (x,y), (y,z), (x,z)),
\end{split}\]
we have by construction, ${\pi_{12}}_\# \beta^a =\gamma_{k+1}^a$, ${\pi_{23}}_\#\beta^a=\gamma^a$. By the very definition of $\nu_\eps^a$, we also have $(\pi_1, (1-\eps)\pi_2+\eps \pi_3))_\#\beta^a \in \Pi(\nu_k^a, \nu_\eps^a)$ so that
\[W_2^2(\nu_k^a, \nu_{k+1}^a)= \int_{\R^3} \vert y-x\vert^2 \mbox{d} \beta^a(x,y,z)\]
and
\[W_2(\nu_k^a, \nu_\eps^a)\le \int_{\R^3} \vert (1-\eps)y+\eps z-x\vert^2 \mbox{d} \beta^a(x,y,z).\]
Using Lebesgue's dominated convergence Theorem and recalling the definition of $\beta^a$ and $v_{k+1}^a$ we then get
\[\begin{split}
 \liminf_{\eps \to 0^+} \frac{1}{\eps} \Big(  \frac{1}{2\tau} (d^2(\bn_\eps, \bn_k)-d^2(\bn_{k+1}, \bn_k) \Big)\le\\
 \int_A \Big(\int_{\R^3}  (z-y)\cdot \frac{y-x}{\tau} \mbox{d} \beta^a (x,y,z) \Big) \mbox{d} \mu(a)\\
=  \int_A \Big (\int_{\R^2}     \Big(    \int_{[-R,R]}  \frac{y-x}{\tau} \mbox{d} \gamma_{k+1}^{a,y}(x) \Big)   (z-y)   \mbox{d} \gamma^{a,y}(z) \mbox{d} \nu_{k+1}^a(y)     \Big) \mbox{d} \mu(a)\\
=\int_ {[-R,R]\times [-R', R']\times A}   v^a_{k+1} (y) \cdot (z-y)    \mbox{d} \gamma^a(y,z) \mbox{d}\mu(a).
 \end{split}\]
This yields
\[J(\bt)-J(\bn_{k+1})\ge -\int_ {[-R,R]\times [-R', R']\times A}   v^a_{k+1} (y) \cdot (z-y)    \mbox{d} \gamma^a(y,z) \mbox{d}\mu(a)\]
i.e. $\bv_{k+1}\in -\partial J (\bn_{k+1})$. 
\end{proof}

Let us also extend $v_{k+1}$ by piecewise constant interpolation
\begin{equation}
\bv_{\tau}(t)=\bv_{k+1},  \; t\in ((k\tau, (k+1)\tau], \; t\in [0, T], \; \bv_{k+1}=(v^a_{k+1})_{a\in A},
\end{equation}
so that, thanks to the previous Lemma, we have
\begin{equation}\label{sousgradtau}
\bv_\tau(t)\in -\partial J(\bn_\tau(t)), \; t\in [0, T].
\end{equation}

Thanks to Proposition \ref{linksys}, note that $\sup_{t\in [0,T]}  \Vert \bv_{\tau}(t)\Vert_{L^{\infty}(\bn_\tau(t)\otimes \mu)} \le C$; we can then define the time-dependent-family of signed measures
\[ \mbox{d}\bq_\tau(t)= \bv_\tau(t) \mbox{d} \bn_{\tau}(t), \mbox{ i.e. }  \mbox{d} q_\tau(t)^a= v_\tau(t)^a \mbox{d} \nu_{\tau}(t)^a.\]
Denoting by $\lambda$ the one dimensional Lebesgue measure on $[0,T]$, we may assume, taking a subsequence if necessary, that the bounded family of measures on $\bq_{\tau_n} \otimes \mu \otimes \lambda$ converges weakly $*$ to some bounded signed measure on $[-R,R]\times A\times [0,T]$ which is necessarily  of the form $\bq \otimes \mu \otimes \lambda$ because marginals (with respect to the $a$ and $t$ variables) are stable under weak limits. Since $\vert \bq_{\tau_n}\vert \otimes \mu \otimes \lambda  \le C \bn_{\tau_n}   \otimes \mu \otimes \lambda$ and $\bn_{\tau_n}\otimes \mu$ converges weakly $*$ to $\bn\otimes \mu$,  we have $\vert \bq \vert  \otimes \mu \otimes \lambda  \le C \bn   \otimes \mu \otimes \lambda$.   Hence, for $\mu\otimes \lambda$ a.e. $(a,t)$, the limit satisfies  $\vert q(t)^a \vert \le C \nu(t)^a$ and therefore can be written in the form $\mbox{d} q(t)^a=v(t)^a \mbox{d} \nu^a(t)$  ($\bq= \bv \bn$ for short) with $\Vert \bv(t)\Vert_{L^{\infty}(\bn(t)\otimes \mu)} \le C$ for $\lambda$-a.e. $t\in [0,T]$. We thus have
\begin{equation}\label{cvq}
\bq_{\tau_n} \otimes \mu \otimes \lambda=  (\bv_{\tau_n} \bn_{\tau_n}) \otimes \mu \otimes \lambda \ws \bq   \otimes \mu \otimes \lambda = (\bv \bn) \otimes \mu \otimes \lambda \mbox{ as $n\to +\infty$}.
\end{equation}
In other words,  for every $\phi\in C([0,T]\times A\times[-R, R])$ we have
\[\begin{split}
\lim_n\int_0^T \int_{A} \Big(\int_{[-R, R]} \phi(t,a,x) v_{\tau_n}(t)^a(x) \mbox{d} \nu_{\tau_n}(t)^a(x) \Big) \mbox{d} \mu(a) \mbox{d}t\\
=\int_0^T \int_{A} \Big(\int_{[-R, R]} \phi(t,a,x) v(t)^a(x) \mbox{d} \nu(t)^a(x) \Big) \mbox{d} \mu(a) \mbox{d}t.
\end{split}\]

\subsection{Existence by passing to the limit}

Our task now consists in showing that the limit curve $t\mapsto \bn(t)$ is a gradient flow solution associated to the velocity $t\mapsto \bv(t)$ constructed above.  Let us first check that it satisfies the system of continuity equations \pref{s1}. To do so, take test functions $\psi\in C(A)$ and $\phi \in C^2([0,T]\times [-R,R])$ and let us consider
\[\begin{split}
&\int_0^{N\tau}  \Big( \int_{K} \psi(a) \partial_t \phi(t,x) \mbox{d} \nu_{\tau}(t)^a(x) \mbox{d} \mu(a)    \Big) \mbox{d} t\\
&=\int_A \psi(a) \Big(  \sum_{k=0}^{N-1}  \int_{-R}^R (\phi((k+1)\tau, x)-\phi(k\tau,x)) \mbox{d} \nu_{k+1}^a(x) \Big)  \mbox{d} \mu(a).
\end{split}\]
Then, we rewrite 
\[\begin{split}
\sum_{k=0}^N  \int_{-R}^R (\phi((k+1)\tau, x)-\phi(k\tau,x)) \mbox{d} \nu_{k+1}^a(x)\\
=\sum_{k=1}^{N-1}  \int_{-R}^R \phi(k\tau,x)) \mbox{d} (\nu_k^a- \nu_{k+1}^a)(x)\\
+   \int_{-R}^R \phi(N\tau, x)  \mbox{d} \nu_{N}^a(x)- \int_{-R}^R \phi(0, x)  \mbox{d} \nu_1^a(x).
\end{split}\]
Using the optimal plans $\gamma^a_{k+1}$ as in Lemma \ref{lem-el}, we then rewrite
\[\int_{-R}^R \phi(k\tau,x)) \mbox{d} (\nu_k^a- \nu_{k+1}^a)(x)=\int_{-R}^R  \int_{-R}^R (\phi(k\tau,x)-\phi(k\tau,y)) \mbox{d} \gamma_{k+1}^a(x,y).\]
A Taylor expansion gives
\[ \phi(k\tau,x)-\phi(k\tau,y)=\partial_x \phi(k\tau, y)(x-y)+ l_k(\tau, a, x,y), \; \vert l_k(\tau, a, x,y)\vert \leq \Vert \partial_{xx} \phi \Vert_{\infty} \vert x-y\vert^2.\]
Integrating and using the optimality of $\gamma^a_{k+1}$ gives
\[ l_k(\tau, a):= \int_{-R}^R  \int_{-R}^R \vert l_k(\tau, a, x,y)\vert   \mbox{d} \gamma_{k+1}^a(x,y)  \leq   \Vert \partial_{xx} \phi \Vert_{\infty} W_2^2(\nu_k^a, \nu_{k+1}^a)\]
and then, recalling \pref{estim2} we have
\begin{equation}\label{err1}
\int_A   \psi(a) \sum_{k=1}^{N-1} l_k(\tau, a)  \mbox{d} \mu(a)  \le C \tau \Vert \partial_{xx} \phi \Vert_{\infty} \Vert \psi \Vert_{\infty}.
\end{equation}

Recalling the definition of the discrete velocity $v_{k+1}$ from Lemma \ref{lem-el}, we can rewrite
\[\int_{-R}^R  \int_{-R}^R \partial_x  \phi(k\tau, y)(x-y)  \mbox{d} \gamma_{k+1}^a(x,y)=-\tau \int_{-R}^R \partial_x \phi(k\tau, x) v_{k+1}^a(x) \mbox{d} \nu^a_{k+1}(x),\]
hence by definition of $\bn_\tau$ and $\bv_\tau$ 
\[\begin{split}
\int_A \psi(a) \Big(\sum_{k=1}^{N-1} \int_{-R}^R  \int_{-R}^R \partial_x  \phi(k\tau, y)(x-y)  \mbox{d} \gamma_{k+1}^a(x,y)\Big) \mbox{d} \mu(a)\\
=- \int_0^T \int_K \psi(a) \partial_x \phi(t,x) v_\tau(t)^a \mbox{d}\nu_{\tau}(t)^a(x) \mbox{d} \mu(a) \mbox{d}t +O(\tau).
\end{split}\]
Now thanks to \pref{estim4}, we have 
\begin{equation}\label{err2}
\lim_n  \int_A \psi(a) \Big( \int_{-R}^R \phi(N\tau_n, x)  \mbox{d} \nu_{N}^a(x) \Big)  \mbox{d} \mu(a)=\int_A \psi(a) \Big( \int_{-R}^R \phi(T, x)  \mbox{d} \nu(T)^a(x) \Big)  \mbox{d} \mu(a)
\end{equation}
and
\begin{equation}\label{err3}
\lim_n  \int_A \psi(a) \Big (\int_{-R}^R \phi(0, x)  \mbox{d} \nu_1^a(x))  \Big)  \mbox{d} \mu(a)=\int_A \psi(a) \Big( \int_{-R}^R \phi(0, x)  \mbox{d} \nu_{0}^a(x) \Big)  \mbox{d} \mu(a),
\end{equation}
where we use in the above limits that $\nu_{N}^a=\nu^a_{\tau_n}(N\tau_n)$ and $\nu^a_1=\nu^a_{\tau_n}(\tau_n)$.
Putting the previous computations together, summing  and using \pref{err2}, \pref{err1}, \pref{err3}, we thus obtain
\[\begin{split}
\int_0^{N\tau}  \Big( \int_{K} \psi(a) \partial_t \phi(t,x) \mbox{d} \nu_{\tau}(t)^a(x) \mbox{d} \mu(a)    \Big) \mbox{d} t\\
=- \int_0^T \int_K \psi(a) \partial_x \phi(t,x) v_\tau(t)^a \mbox{d}\nu_{\tau}(t)^a(x) \mbox{d} \mu(a) \mbox{d}t  \\
+ \int_A \psi(a) \Big( \int_{-R}^R \phi(T, x)  \mbox{d} \nu(T)^a(x) \Big)  \mbox{d} \mu(a)\\
-\int_A \psi(a) \Big( \int_{-R}^R \phi(0, x)  \mbox{d} \nu_{0}^a(x) \Big)  \mbox{d} \mu(a)+\eps_\tau
\end{split}\]
where $\eps_{\tau_n}$ goes to $0$ as $n\to +\infty$. Taking $\tau=\tau_n$, using  \pref{estim4}, \pref{cvq} and letting  $n\to +\infty$ in the previous identity we get
\[\begin{split}
\int_{A} \psi(a) \Big(\int_0^T \int_{-R}^R (\partial_t \phi(t,x)+\partial_x \phi(t,x) v(t)^a(x) ) \mbox{d} \nu(t)^a(x) \mbox{d} t\Big) \mbox{d} \mu(a)\\
= \int_{A} \psi(a) \Big( \int_{-R}^R \phi(T,x) \mbox{d} \nu(T)^a(x)-  \int_{\R} \phi(0,x) \mbox{d} \nu_0^a(x)  \Big) \mbox{d} \mu(a).
\end{split}\]
In other words, we have proved the following:
\begin{lem}\label{contok}
For $\mu$-almost every $a$, the limit curve $t\mapsto \nu(t)^a$ solves the continuity equation \pref{s1} associated to the limit velocity $t\mapsto v(t)^a$. 
\end{lem}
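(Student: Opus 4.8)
The plan is to read off Lemma \ref{contok} from the global weak identity displayed just above the statement, upgrading it from ``for every $\psi\in C(A)$'' to a pointwise-in-$a$ statement valid for $\mu$-almost every label.

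First I would fix a test function $\phi\in C^2([0,T]\times[-R,R])$ and set
\[\Phi_\phi(a):=\int_0^T\int_{-R}^R(\partial_t\phi+v(t)^a\,\partial_x\phi)\,\mbox{d}\nu(t)^a\,\mbox{d}t-\int_{-R}^R\phi(T,x)\,\mbox{d}\nu(T)^a(x)+\int_{-R}^R\phi(0,x)\,\mbox{d}\nu_0^a(x).\]
Using the measurability in $a$ built into the construction of $\bn$, of $\bq=\bv\bn$ and of the limit curve (so that $a\mapsto\Phi_\phi(a)$ is $\mu$-measurable), together with the uniform bounds $\nu(t)^a(\R)=h(a)$, $\supp(\nu(t)^a)\subset[-R,R]$ and $\Vert\bv(t)\Vert_{L^{\infty}(\bn(t)\otimes\mu)}\le C$ (the last one from Proposition \ref{linksys}), one sees that $|\Phi_\phi(a)|\le C\,h(a)$, so $\Phi_\phi\in L^1(\mu)$. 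The displayed identity says precisely $\int_A\psi(a)\Phi_\phi(a)\,\mbox{d}\mu(a)=0$ for every $\psi\in C(A)$; testing against a uniformly bounded sequence of continuous $\psi$'s approximating $\sign(\Phi_\phi)$ (legitimate since $h\mu$ is a finite Radon measure on the compact set $A$ and $\Phi_\phi/h$ is bounded) gives $\Phi_\phi(a)=0$ for $\mu$-a.e. $a$, with a $\mu$-null exceptional set $N_\phi$ depending on $\phi$.

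Next I would remove the dependence of $N_\phi$ on $\phi$ by a countable density argument. Choose a countable family $\mathcal{D}\subset C^2([0,T]\times[-R,R])$ dense in $C_c^1([0,T]\times\R)$ for the $C^1$ norm on $[0,T]\times[-R,R]$ (admissible because all measures occurring here are supported in $[-R,R]$), and put $N:=\bigcup_{\phi\in\mathcal{D}}N_\phi$, enlarged by the $\mu$-null set of labels for which the mass, support or $L^\infty$-velocity bounds fail. For $a\notin N$ one has $\Phi_\phi(a)=0$ for every $\phi\in\mathcal{D}$; given an arbitrary $\phi\in C_c^1([0,T]\times\R)$ and $\phi_k\in\mathcal{D}$ with $\phi_k\to\phi$ in $C^1$ on $[0,T]\times[-R,R]$, the uniform bounds on $\nu(t)^a$ and $v(t)^a$ let one pass to the limit under all the integrals, yielding $\Phi_\phi(a)=0$. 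This is exactly the assertion that, for $a\notin N$, the curve $t\mapsto\nu(t)^a$ solves the continuity equation \pref{s1} with velocity $t\mapsto v(t)^a$ in the distributional sense of the Definition.

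The one genuinely delicate step is this exchange of quantifiers, i.e. producing a single $\mu$-null exceptional set: it works only because the class of test functions can be reduced to a countable dense family, which in turn relies on the a priori support bound $\supp(\nu(t)^a)\subset[-R,R]$, and because the uniform velocity bound from Proposition \ref{linksys} makes the passage $\phi_k\to\phi$ harmless uniformly in $a\notin N$. Everything else --- the $L^1(\mu)$ membership of $\Phi_\phi$, the vanishing of $\int_A\psi\Phi_\phi\,\mbox{d}\mu$, and the density of $C^2$ in $C_c^1$ --- is routine.
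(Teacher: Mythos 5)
Your argument is correct and carefully justifies a step the paper compresses into the phrase ``In other words, we have proved the following'': namely, the passage from the weak identity tested against products $\psi(a)\phi(t,x)$ with $\psi\in C(A)$ varying, to the pointwise-in-$a$ distributional statement holding for $\mu$-almost every label. Your treatment of the details is sound: $\Phi_\phi\in L^1(\mu)$ follows from the mass, support and $L^\infty$-velocity bounds; testing against $\psi$'s approximating $\sign(\Phi_\phi)$ (or, more simply, noting that $\Phi_\phi\mu$ is a finite signed measure annihilated by all of $C(A)$) yields $\Phi_\phi=0$ $\mu$-a.e.; and the countable-density argument, which is the genuinely delicate point, is correctly anchored on the a priori support bound $\supp(\nu(t)^a)\subset[-R,R]$ and the uniform bound $\Vert\bv(t)\Vert_{L^\infty(\bn(t)\otimes\mu)}\le C$, both of which make the passage from a countable dense subfamily of $C^2$ test functions to all of $C_c^1$ harmless uniformly in $a$. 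This is exactly the ``$\Leftarrow$'' direction of the equivalence the paper records after the Definition of measure solutions without proof.

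You should be aware, though, that what the paper itself regards as the proof of Lemma \ref{contok} is the entire JKO-limit derivation preceding the displayed identity, not the extraction of the pointwise statement from it. That derivation starts from the discrete Euler--Lagrange relation of Lemma \ref{lem-el}, writes the discrete time-derivative as a telescoping sum against the optimal plans $\gamma^a_{k+1}$, Taylor-expands $\phi\in C^2$, controls the quadratic remainder via the summed energy estimate \pref{estim2} to get an $O(\tau)$ error, and then passes to the limit $\tau=\tau_n\to 0$ using \pref{estim4} for the uniform $d_w$-convergence of $\bn_{\tau_n}(\cdot)$ and \pref{cvq} for the weak-$*$ convergence of the momenta $\bq_{\tau_n}\otimes\mu\otimes\lambda$. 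Your proposal explicitly takes the resulting identity as its starting point, which is a defensible reading of the lemma's placement and correctly fills a gap the paper leaves implicit; but if the lemma were to be proved from the scheme alone, this limit passage is where the real work lies and it is absent from your account. In short: correct, complementary to the paper rather than a substitute for it.
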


It remains to check that 

\begin{lem}\label{sdok}
For a.e. $t\in [0,T]$, we have $\bv(t)\in -\partial J(\bn(t))$. 
\end{lem}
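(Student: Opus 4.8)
The plan is to pass to the limit in the discrete inclusion \pref{sousgradtau}, namely $\bv_{\tau_n}(t)\in-\partial J(\bn_{\tau_n}(t))$, and deduce $\bv(t)\in-\partial J(\bn(t))$ for a.e.\ $t$. The natural strategy is to integrate the subdifferential inequality in time against a nonnegative test function and then use lower/upper semicontinuity of the various terms. Concretely, fix a competitor in the form of a transition kernel, as in Remark \ref{sousdiffdes}: take $\bet=(\eta^{a,y})\in T(\bn(t))$ — more precisely a $t$-dependent family, say $\eta^{a,y}=\delta_{\xi^a(y)}$ with a fixed bounded map $\bxi$ (working with maps is enough, by the remark following Proposition \ref{linksys}, to recover the local characterization \pref{localsd}, and this is what we really need). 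For the discrete flow, \pref{sousgradtau} and \pref{sousdiffmap} give, for every $\bxi\in L^\infty$ with $\Vert\bxi\Vert\le C$ and every nonnegative $\chi\in C([0,T])$,
\[
\int_0^T\chi(t)\Big(J((\bid+\bxi)_\#\bn_{\tau_n}(t))-J(\bn_{\tau_n}(t))\Big)\mbox{d}t\ \ge\ -\int_0^T\chi(t)\int_K v_{\tau_n}(t)^a(x)\,\xi^a(x)\,\mbox{d}\nu_{\tau_n}(t)^a(x)\,\mbox{d}\mu(a)\,\mbox{d}t.
\]
Now I would pass to the limit $n\to\infty$ term by term. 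On the right-hand side, since $\xi^a(x)$ may be taken continuous (or approximated by continuous functions in $L^1(\bn(t)\otimes\mu)$, uniformly in $t$, using a Lusin-type argument and the uniform support bound), the convergence \pref{cvq} gives exactly the limit $-\int_0^T\chi(t)\int_K v(t)^a(x)\xi^a(x)\,\mbox{d}\nu(t)^a(x)\,\mbox{d}\mu(a)\,\mbox{d}t$. On the left-hand side, $J$ is continuous for $d_w$ by Lemma \ref{basic}(1), and $(\bid+\bxi)_\#(\cdot)$ is $d_w$-continuous (it is even $d$-contractive-like: pushing an optimal plan forward), while \pref{estim4} gives $d_w(\bn_{\tau_n}(t),\bn(t))\to0$ uniformly in $t$; dominated convergence then yields $\int_0^T\chi(t)(J((\bid+\bxi)_\#\bn(t))-J(\bn(t)))\,\mbox{d}t$ as the limit. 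Hence the inequality
\[
\int_0^T\chi(t)\Big(J((\bid+\bxi)_\#\bn(t))-J(\bn(t))+\int_K v(t)^a(x)\xi^a(x)\,\mbox{d}\nu(t)^a(x)\,\mbox{d}\mu(a)\Big)\mbox{d}t\ \ge\ 0
\]
holds for all nonnegative $\chi$, so the bracketed quantity is $\ge0$ for a.e.\ $t$. Taking $\bxi$ ranging over a countable dense family of continuous maps, we get a single null set outside of which \pref{sousdiffmap} holds for all such $\bxi$, hence (arguing as at the end of the proof of Proposition \ref{linksys}) the local characterization $v(t)^a(x)\in[G_t^-(x)-a,\,G_t(x)-a]$ a.e., where $G_t$ is the cumulative distribution of $\rho_t=\int_A\nu(t)^a\,\mbox{d}\mu(a)$; by Proposition \ref{linksys} (or rather its converse direction, which is immediate from the computation there) this is equivalent to $\bv(t)\in-\partial J(\bn(t))$.

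The step I expect to be the main obstacle is justifying the passage to the limit in the right-hand side when the test function $\xi^a(x)$ is genuinely $L^\infty$ rather than continuous, which is what is formally needed to fully identify the subdifferential via \pref{sousdiffmap}; however, as noted, it suffices to test against continuous $\bxi$, because Proposition \ref{linksys} shows that testing against continuous maps already forces the pointwise two-sided bound \pref{localsd}, and that bound is equivalent to membership in $-\partial J$. A secondary technical point is the measurable-in-$t$ selection of the various objects and the fact that the ``limit'' velocity $\bv(t)$ produced by the weak-$*$ compactness argument in \pref{cvq} is only defined $\lambda$-a.e.; but this is harmless since the conclusion is itself an a.e.\ statement. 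One should also double-check that the countable family of continuous $\bxi$'s can be chosen so that linear combinations/limits recover the full characterization — this is the same density argument used implicitly in the proof of Proposition \ref{linksys} and causes no difficulty given the uniform bound $\Vert\bv(t)\Vert_{L^\infty}\le C$ and the compact common support $[-R,R]$.
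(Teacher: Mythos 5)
Your outline of the time‐integration and the countable‐family reduction matches Steps~1 and~2 of the paper's argument in spirit, but the step you use to close the proof contains a genuine error, and that error is exactly the one the paper's Step~3 is designed to handle.

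You claim that the pointwise two-sided bound \pref{localsd} is \emph{equivalent} to $\bw\in\partial J(\bn)$, invoking ``the converse of Proposition~\ref{linksys}.'' That converse is false as soon as $\rho:=\int_A\nu^a\,d\mu(a)$ has atoms. A quick way to see it: since $J_0$ is translation invariant and $J_1$ is affine in $x$, testing \pref{sousdiffmap} with the constant shifts $\bxi\equiv\pm1$ forces the \emph{equality}
\[
\int_K w^a(x)\,d(\bn\otimes\mu)(a,x)\;=\;\int_K\Big(\tfrac12-a\Big)\,d(\bn\otimes\mu)(a,x),
\]
and, because $\int G\,d\bar\nu+\int G^-\,d\bar\nu=1$, this is the same as $\int w^a\,d(\bn\otimes\mu)=\int\big(\tfrac{G+G^-}{2}-a\big)\,d(\bn\otimes\mu)$. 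The upper-bound selection $w^a(x)=G(x)-a$ certainly satisfies \pref{localsd}, but
\[
\int G\,d\bar\nu=\tfrac12+\tfrac12\sum_i\big(\bar\nu(\{x_i\})\big)^2>\tfrac12
\]
whenever $\bar\nu$ has atoms, so the required equality fails and this $\bw$ is \emph{not} in $\partial J(\bn)$. More generally, at an atom $x_i$ the subdifferential condition imposes non-local compatibility constraints in the label $a$ (this is precisely what is exploited in the monotonicity \pref{monsousdiff} and hence in the uniqueness proof of Theorem~\ref{well-posed}); the interval constraint \pref{localsd} does not capture them. So proving \pref{localsd} for the limit $\bv(t)$ would only give that $\bn(t)$ is a measure solution of \pref{s1}--\pref{s3}, not the stronger gradient-flow inclusion asserted in Lemma~\ref{sdok}.

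The paper's proof avoids this by never trying to characterize $\partial J$ via \pref{localsd}. It works with the kernel description of Remark~\ref{sousdiffdes}: Step~1 passes to the limit in \pref{sdapprox} for a fixed \emph{continuous} kernel $\bet$ using \pref{cvq} and the $d_w$-Lipschitz continuity of $J$; Step~2 picks a countable family $D$ of such kernels (continuous weights on a fixed dyadic grid) and forms a single null set $S$; and Step~3, for $t\notin S$, approximates an \emph{arbitrary} $\bet\in T(\bn(t))$ by elements of $D$ in the $W_1$ sense (estimates \pref{approxteta2}--\pref{approxteta7}), then passes to the limit in both sides of \pref{approxteta6} using the uniform bound $\|\bv(t)\|_{L^\infty(\bn(t)\otimes\mu)}\le C$ and the Kantorovich duality \pref{kantodual}. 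You cannot skip Step~3 by replacing general kernels with transport maps, because testing only against maps gives at best \pref{localsd}, and \pref{localsd} is strictly weaker than membership in $\partial J(\bn)$.
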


\begin{proof}
By construction of the curves $\bv_\tau$ and $\bn_\tau$ and thanks to Lemma \ref{lem-el}, we have seen in \pref{sousgradtau} that
\[ \bv_\tau(t)\in  -\partial J(\bn_\tau(t)), \; \forall t\in [0,T]\]
which means that for every $\tau>0$, every $t\in [0,T]$ and every $\bet\in T(\bn_\tau(t))$ (as defined in Remark \ref{sousdiffdes}), we have  
\begin{equation}\label{sdapprox}
J({\bn_\tau(t)}_\bet)-J(\bn_\tau(t))\ge -\int_{A\times \R^2} v_\tau^a(t)(y)(z-y) \mbox{d} \eta^{a, y} (z) \mbox{d} \nu_\tau(t)^a(y) \mbox{d} \mu(a).
\end{equation}
We wish to prove that there exists $S\subset [0,T]$, $\lambda$-negligible, such that for every $t\in [0,T]\setminus S$ and every $\eta\in T(\bn(t))$, one has
\begin{equation}\label{sdlim}
J({\bn(t)}_\bet)-J(\bn(t))\ge -\int_{A\times \R^2} v^a(t)(y)(z-y) \mbox{d} \eta^{a, y} (z) \mbox{d} \nu(t)^a(y) \mbox{d} \mu(a).
\end{equation}
To pass to the limit $\tau=\tau_n$, $n\to \infty$ in \pref{sdapprox} to obtain \pref{sdlim}, we shall proceed in several steps. Let us remark  that it is enough to prove \pref{sdapprox} when  $\eta^{a,y}$ is supported by a fixed compact interval $[-R', R']$ (and then to take an exhaustive sequence of such compact intervals).  Let us also recall that, thanks to Lemma \ref{basic} and \pref{estim4},  $J(\bn_{\tau_n}(t))$ converges to $J(\bn(t))$ as $n\to \infty$ uniformly on $[0,T]$.

{\bf{Step 1:}} Let us first consider the case where $\bet$ is continuous in the sense that $(a,y)\in K \mapsto \int_{[-R', R']} \varphi(z) \mbox{d} \eta^{a,y}(z)$ is continuous for every $\varphi \in C(\R)$. Let $\phi\in C(A\times \R)$. Since $\varphi_\bet$ defined by $\varphi_\bet(a,y):=\int \phi(a,z) \mbox{d} \eta^{a,y}(z)$ belongs to $C(K)$, using the fact that 
\[\begin{split}
\langle \phi, \bn_{\tau_n}(t)_\bet \otimes \mu\rangle=\langle \varphi_\bet, \bn_{\tau_n}(t) \otimes \mu\rangle,\\
\langle \phi, \bn(t)_\bet \otimes \mu\rangle=\langle \varphi_\bet, \bn(t) \otimes \mu\rangle
\end{split}\]
 and \pref{estim4}, we deduce that $\lim_n d_w(\bn_{\tau_n}(t)_\bet, \bn(t)_\bet)=0$ for every $t\in [0,T]$. Hence,  thanks to Lemma \ref{basic}, we have 
 \begin{equation}\label{sdapprox1}
 \lim_{n}  [J({\bn_{\tau_n}(t)}_\bet)-J(\bn_{\tau_n}(t))]= J({\bn_\tau(t)}_\bet)-J(\bn_\tau(t)), \; \forall t\in [0,T].
 \end{equation}
Let $\varphi \in C([0,T])$, $\varphi\ge 0$. Using \pref{sdapprox} gives  
\[\begin{split}
&\int_0^T \varphi(t)   [J({\bn_{\tau_n}(t)}_\bet)-J(\bn_{\tau_n}(t))]\mbox{d} t \ge\\
& -\int_{[0,T]\times A\times \R^2}  \varphi(t) v_{\tau_n}^a(t)(y)(z-y) \mbox{d} \eta^{a, y} (z) \mbox{d} \nu_{\tau_n}(t)^a(y) \mbox{d} \mu(a)   \mbox{d} t\\
&=-\int_{[0,T]\times K} \varphi(t) \psi(a,y) \mbox{d} q_{\tau_n}(t)^a(y) \mbox{d}\mu(a) \mbox{d} t
\end{split}\]
where  
\[\psi(a,y):=\int (z-y)\mbox{d} \eta^{a,y}(z) \]
belongs to $C(K)$. We then deduce from \pref{cvq}, \pref{sdapprox1} and Lebesgue's dominated convergence that  
\[\begin{split}
&\int_0^T \varphi(t)   [J({\bn(t)}_\bet)-J(\bn(t))]\mbox{d} t \ge\\
&=-\int_{[0,T]\times K} \varphi(t) \psi(a,y) \mbox{d} q^a(y) \mbox{d}\mu(a) \mbox{d} t\\
&= -\int_{[0,T] \times A\times \R^2}     \varphi(t) v^a(t)(y)(z-y) \mbox{d} \eta^{a, y} (z) \mbox{d} \nu(t)^a(y) \mbox{d} \mu(a)   \mbox{d} t.
\end{split}\]
This implies that there exists a negligible subset $S_\bet$ of $[0,T]$ outside which \pref{sdlim} holds. 
\smallskip

{\bf{Step 2:}} For every $N\in \N^*$, let $\Delta_N:=\{(\alpha_0, \cdots, \alpha_{2N-1}) \in \R_+^{2N} \; : \; \sum_{k=0}^{2N-1} \alpha_i=1\}$, $F_N$ be a countable and dense family in $C(K, \Delta_N)$, and consider 
\[D_N:=\{ (a,y)\in K \mapsto \sum_{k=0}^{2N-1} \alpha_k(a,y) \delta_{z_k^N}, \; (\alpha_0, \ldots, \alpha_{2N-1})\in F_N\}, \;  D:=\bigcup_{N\in \N^*} D_N\]
where for $k=0, \ldots, 2N-1$, $z_k^N$ denotes the midpoint of the interval $[-R'+kR'/N, -R'+(k+1)R'/N]$. Since $D$ is countable and its elements belong to $C(K, (\PP([-R', R']), W_2))$, it follows from Step 1, that  \pref{sdlim} holds for every  $\bet\in D$ and every $t\in [0,T]\setminus S$ where $S$ is the $\lambda$-negligible set
\begin{equation}\label{defdus}
S:=\bigcup_{\bet\in D} S_{\bet}.
\end{equation}

\smallskip

{\bf{Step 3:}} Let $t\in [0,T]\setminus S$,  and  $\bet\in T(\bn)$ having its support in $[-R', R']$. Note that now we are working with a fixed $t$ so that we just have to suitably approximate $\bet$ by a sequence in $D$. For $N\in \N^*$, first define for every $(a,y)\in K$ the discrete measure
\begin{equation}\label{approxeta1}
\sum_{k=0}^{2N-1} f_k^N(a,y) \delta_{z_k^N}, \; f_k^N(a,y) :=\eta^{a,y}(I_k^N)
\end{equation}
where $I_k^N$ is the interval $[-R'+kR'/N, -R'+(k+1)R'/N)$ if $k=0, \ldots, 2N-2$ and $I_{2N-1}^N:=[R'(1-1/N), R']$. We then have
\begin{equation}\label{approxteta2}
\sup_{(a,y)\in K} W_1\Big(\eta^{a,y}, \sum_{k=0}^{2N-1} f_k^N(a,y) \delta_{z_k^N}\Big) \le \frac{R'}{N}.
\end{equation}
The function $(f_k^N)_{k=0, \ldots, 2N-1}$ is not continuous but belongs to $L^1(\bn(t)\otimes \mu, \Delta_N)$. Since $C(K, \Delta_N)$ is dense in $L^1(\bn(t)\otimes \mu, \Delta_N)$, there exist $(g_0^N, \ldots, g_{2N-1}^N)\in C(K, \Delta_N)$ such that
\begin{equation}\label{approxteta3}
\sum_{k=0}^{2N-1} \int_K \vert f_k^N(a,y)-g_k^N(a,y) \vert \mbox{d} \nu(t)^a(x) \mbox{d} \mu(a) \le \frac{1}{N}. 
\end{equation}
Since we have chosen $F_N$ dense in $C(K, \Delta_N)$, there exist $\alpha=(\alpha_0^N, \ldots, \alpha_{2N-1}^N)\in F_N$ such that 
\begin{equation}\label{approxteta4}
\sum_{k=0}^{2N-1} \sup_{(a,y)\in K} \vert g_k^N(a,y)-\alpha_k^N(a,y) \vert \le \frac{1}{N}. 
\end{equation}
We then define $\eta_N\in D$ by
\[\eta_N^{a,y}:=\sum_{k=0}^{2N-1} \alpha_k^N(a,y) \delta_{z_k^N}.\]
Thanks to Kantorovich duality formula \pref{kantodual}, it is easy to  see that for every $\alpha$ and $\beta$ in $\Delta_N$, $W_1(\sum_k \alpha_k \delta_{z_k^N}, \sum_k \beta_k \delta_{z_k^N}) \le R' \sum_k \vert \alpha_k-\beta_k \vert$. In particular, thanks to \pref{approxteta3}, we have
\begin{equation}\label{approxeta33}
\int_K W_1\Big(\sum_k f_k^N (a,y) \delta_{z_k^N}, \sum_k g_k^N (a,y) \delta_{z_k^N}\Big) \mbox{ d} (\bn(t)\otimes \mu)(a,y)  \le \frac{R'}{N}.
 \end{equation}

Similarly,  \pref{approxteta4} implies that
 \begin{equation}\label{approxteta5}
\sup_{(a,y)\in K} W_1\Big(\eta_N^{a,y},  \sum_{k=0}^{2N-1}  g_k^N(a,y)\delta_{z_k^N}\Big)  \le \frac{R'}{N}. 
\end{equation}
 We know, from Step 2 that for every $N\in \N^*$:
 \begin{equation}\label{approxteta6}
 J({\bn(t)}_{\bet_N})-J(\bn(t))\ge -\int_{A\times \R^2} v^a(t)(y)(z-y) \mbox{d} \eta_N^{a, y} (z) \mbox{d} \nu(t)^a(y) \mbox{d} \mu(a). 
\end{equation}

Thanks to \pref{approxteta2}, \pref{approxeta33}, \pref{approxteta5} and the triangle inequality, we  have
\begin{equation}\label{approxteta7}
\lim_{N\to \infty} \int_K W_1(\eta^{a,y}, \eta_N^{a,y}) \mbox{ d} (\bn(t)\otimes \mu)(a,y)=0.
\end{equation}
Recalling that $\bv(t)\in L^{\infty}(\bn(t)\otimes \mu)$ and using \pref{rappel2}, we have 
\[\begin{split}\Big\vert   \int_{K} v^a(t)(y) \Big( \int_{[-R',R']} (z-y) \mbox{d} (\eta_N^{a, y}-\eta^{a,y}) (z)\Big) \mbox{d} \nu(t)^a(y) \mbox{d} \mu(a)     \Big\vert  \\
 \le \Vert \bv\Vert_{L^{\infty}(\bn(t)\otimes \mu)} \int_K W_1(\eta^{a,y}, \eta_N^{a,y}) \mbox{ d} (\bn(t)\otimes \mu)(a,y)
\end{split}\]
so that the right-hand side of \pref{approxteta6} converges to 
\[-\int_{A\times \R^2} v^a(t)(y)(z-y) \mbox{d} \eta^{a, y} (z) \mbox{d} \nu(t)^a(y) \mbox{d} \mu(a)\]
as $N\to \infty$. As for the convergence of the right-hand side of \pref{approxteta6}, we have to show that $\lim_N W_1(\bn_{\bet_N}\otimes \mu, \bn_{\bet}\otimes \mu)=0$. For this, we shall use the Kantorovich-duality formula \pref{kantodual} and observe that if $\phi\in C(K)$ is $1$-Lipschitz then
\[\int_K \phi(a,y) \mbox{d}  ((\bn_{\bet_N}- \bn_{\bet})\otimes \mu)  (a,y) \le  \int_K W_1(\eta^{a,y}, \eta_N^{a,y}) \mbox{ d} (\bn(t)\otimes \mu)(a,y)\]
which tends to $0$ as $N\to \infty$ thanks to \pref{approxteta7}. Using Lemma \ref{basic} we then have $\lim_{N\to \infty} J({\bn(t)}_{\bet_N})= J({\bn(t)}_{\bet})$. Passing to the limit $N\to \infty$ in \pref{approxteta6} gives the desired inequality \pref{sdlim}. This shows that $\bv(t)\in -\partial J(\bn(t))$ for every $ t\in [0,T]\setminus S$. 
\end{proof}

We deduce from Lemma \ref{contok} and Lemma \ref{sdok} the following existence result:

\begin{thm}\label{refexist}
If \pref{hypf0} holds, then for any $T>0$, there exists a gradient flow of $J$ starting from $\bn_0$ on the time interval $[0,T]$. In particular, there exists measure solutions to the system \pref{s1}-\pref{s2}-\pref{s3}.
\end{thm}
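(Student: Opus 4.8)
The plan is to establish Theorem \ref{refexist} by combining the work already done in this section: the JKO scheme produces, for each time step $\tau>0$, a discrete sequence $\bn_k$ and a discrete velocity $\bv_k$, and the point is simply to check that the limit objects $(\bn(t),\bv(t))$ extracted along the vanishing subsequence $\tau_n\to 0$ satisfy all three requirements of Definition \ref{defgf}. First I would recall that the scheme \pref{jkoschemeR} is well-posed: by Lemma \ref{basic}, $J$ is $d_w$-continuous (indeed Lipschitz) and $d$ is $d_w$-lower semicontinuous, while $(X_R,d_w)$ is compact; hence each minimization problem $\min_{\bn\in X_R}\{\frac1{2\tau}d^2(\bn,\bn_k)+J(\bn)\}$ attains its infimum, so the sequence $\bn_k$, $k=0,\dots,N+1$, exists, with the support bound built in via \pref{choixduR}. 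The a priori estimates \pref{estim1}--\pref{estim3} then give the uniform H\"older bound in $(X_R,d_w)$, and the Ascoli--Arzel\`a argument of \cite{AGS} yields a limit curve $t\mapsto\bn(t)\in C^{0,1/2}([0,T],(X_R,d_w))$ and a subsequence $\tau_n$ with \pref{estim4}.

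Next I would invoke the two lemmas that do the heavy lifting. Lemma \ref{contok} asserts that, for $\mu$-a.e.\ $a$, the limit curve $t\mapsto\nu(t)^a$ solves the continuity equation \pref{s1} with the limit velocity $t\mapsto v(t)^a$; its proof rests on writing the telescoping sum, Taylor-expanding $\phi(k\tau,x)-\phi(k\tau,y)$ to second order, controlling the quadratic remainder by $\sum_k W_2^2(\nu_k^a,\nu_{k+1}^a)\le C\tau$ via \pref{estim2}, and passing to the limit using \pref{estim4} and the weak-$*$ convergence \pref{cvq} of the momenta $\bq_{\tau_n}=\bv_{\tau_n}\bn_{\tau_n}$. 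Lemma \ref{sdok} asserts that $\bv(t)\in-\partial J(\bn(t))$ for a.e.\ $t\in[0,T]$; this follows from the discrete Euler--Lagrange inclusion \pref{sousgradtau} (Lemma \ref{lem-el}) by the delicate three-step approximation argument using transition kernels (Remark \ref{sousdiffdes}): first pass to the limit for continuous kernels using $d_w$-continuity of $J$ and \pref{cvq}, then handle a countable dense family $D$ of finitely-supported kernels so the bad time set $S=\bigcup_{\bet\in D}S_\bet$ is $\lambda$-negligible, then for fixed $t\notin S$ approximate an arbitrary $\bet\in T(\bn(t))$ in $W_1$ and use \pref{rappel2} together with Lemma \ref{basic} to pass to the limit in \pref{approxteta6}.

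With these two lemmas in hand, the curve $t\mapsto\bn(t)$ together with $t\mapsto\bv(t)$ satisfies exactly Definition \ref{defgf}: it is $d$-Lipschitz (in fact the bound \pref{boundsdiff} of Proposition \ref{linksys} upgrades the $C^{0,1/2}$ regularity to Lipschitz for $d$ via \pref{lipent}, once we know $\bv(t)\in-\partial J(\bn(t))$), it solves the continuity equations \pref{s1} for $\mu$-a.e.\ $a$, and it satisfies the subdifferential inclusion a.e.\ in time. Hence it is a gradient flow of $J$ starting from $\bn_0$ on $[0,T]$. Finally, for the second assertion, Proposition \ref{linksys} identifies $-\partial J(\bn(t))$ with velocities $v^a(t,x)$ lying in $[a-G_t(x),a-G_t^-(x)]$, which is precisely the constraint \pref{s2}--\pref{s3}; therefore the gradient flow is a measure solution of the system \pref{s1}-\pref{s2}-\pref{s3} in the sense of the Definition of Section \ref{sec-firstint}, and this is the content of Theorem \ref{refexist}.

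The main obstacle is Lemma \ref{sdok}: passing to the limit in the subdifferential inclusion. The difficulty is that the subdifferential condition must hold against \emph{all} competitors $\bt$ (equivalently all kernels $\bet$), yet the only convergence available for the velocities is weak-$*$ of the momenta \pref{cvq}, not strong convergence of $\bv_{\tau_n}$, and the set $X_R$ is not finite-dimensional. The resolution — already carried out in the excerpt — is the reduction to a countable dense family of piecewise-constant kernels so that a single null set $S$ works for all of them simultaneously, followed by a uniform $W_1$-approximation argument that upgrades to arbitrary kernels using the $L^\infty$ bound on $\bv$ and the $d_w$-continuity of $J$. Everything else — well-posedness of the scheme, the energy dissipation estimates, the Ascoli-type compactness, and the final identification with \pref{s2}-\pref{s3} — is routine given Lemmas \ref{basic}, \ref{lem-el} and Proposition \ref{linksys}.
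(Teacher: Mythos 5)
Your proposal is correct and follows exactly the paper's route: well-posedness of the JKO scheme via Lemma \ref{basic} and compactness of $(X_R,d_w)$, the energy estimates \pref{estim1}--\pref{estim3} plus Ascoli--Arzel\`a to extract the limit curve, and then Lemmas \ref{contok} and \ref{sdok} (together with Proposition \ref{linksys} and \pref{lipent}) to verify Definition \ref{defgf} and deduce that the limit is a measure solution of \pref{s1}-\pref{s2}-\pref{s3}. The paper's own proof of the theorem is just the single sentence deducing it from Lemmas \ref{contok} and \ref{sdok}, and your expanded account of that deduction, including the three-step approximation in Lemma \ref{sdok}, is faithful.
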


\section{Uniqueness and concluding remarks}\label{sec-wp}

\subsection{Uniqueness and stability}

Thanks to \pref{monsousdiff}, we easily deduce uniqueness and stability:

\begin{thm}\label{well-posed}
Let $\bn_0$ and $\bt_0$ be in $X_R$. If $t\mapsto \bn(t)$ and $t\mapsto \bt(t)$ are gradient flows of $J$ starting respectively from $\bn_0$ and $\bt_0$, then 
\[d(\bn(t), \bt(t))\le d(\bn_0, \bt_0), \; \forall t\in \R_+.\]
In particular there is a unique gradient flow of $J$ starting from $\bn_0$. 
\end{thm}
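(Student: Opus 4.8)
The plan is to show that $t\mapsto d^2(\bn(t),\bt(t))$ is nonincreasing, which is the usual ``$0$-contraction'' estimate for the gradient flow of a convex functional. Write $\bn(t)=(\nu(t)^a)_{a\in A}$ and $\bt(t)=(\theta(t)^a)_{a\in A}$, and let $\bv(t)=(v(t)^a)_{a\in A}$, $\bw(t)=(w(t)^a)_{a\in A}$ be the associated velocities, so that $\bv(t)\in-\partial J(\bn(t))$ and $\bw(t)\in-\partial J(\bt(t))$ for a.e.\ $t$, while for $\mu$-a.e.\ $a$ the curves $t\mapsto\nu(t)^a$ and $t\mapsto\theta(t)^a$ solve the continuity equation \pref{s1} with the respective velocity fields. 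Recall that by Proposition \ref{linksys} the velocities are bounded by $R_v+2$ and by \pref{lipent} the curves are Lipschitz for $W_2$; these bounds make the manipulations below legitimate.

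Fix an $a$ with $h(a)>0$ (the values with $h(a)=0$ contribute nothing to $d$). First I would normalise to probability measures $\bar\nu(t)^a:=h(a)^{-1}\nu(t)^a$ and $\bar\theta(t)^a:=h(a)^{-1}\theta(t)^a$, which solve the same continuity equations, and invoke the one-dimensional isometry $\rho\mapsto F_\rho^{-1}$ from $(\PP_2(\R),W_2)$ onto a convex subset of $L^2(0,1)$. Under it, $t\mapsto\bar\nu(t)^a$ corresponds to a Lipschitz curve $t\mapsto X_t^a$ in $L^2(0,1)$ with $\partial_t X_t^a(s)=v(t)^a(X_t^a(s))$ for a.e.\ $s$, and likewise $t\mapsto Y_t^a$ for $\bar\theta(t)^a$ with velocity $w(t)^a$. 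Since absolutely continuous $L^2$-valued curves are differentiable a.e., for a.e.\ $t$
\[
\frac{d}{dt}\,\tfrac12 W_2^2\bigl(\bar\nu(t)^a,\bar\theta(t)^a\bigr)=\bigl\langle X_t^a-Y_t^a,\ \partial_t X_t^a-\partial_t Y_t^a\bigr\rangle_{L^2}=\int_0^1\bigl(X_t^a(s)-Y_t^a(s)\bigr)\bigl(v(t)^a(X_t^a(s))-w(t)^a(Y_t^a(s))\bigr)\,ds.
\]
Writing $\gamma_t^a$ for the (monotone) optimal plan in $\Pi(\nu(t)^a,\theta(t)^a)$, i.e.\ the image of $h(a)\,\Leb^1$ on $(0,1)$ under $s\mapsto(X_t^a(s),Y_t^a(s))$ — which depends measurably on $(t,a)$ by the explicit monotone rearrangement formula — this becomes
\[
\frac{d}{dt}\,\tfrac12 W_2^2\bigl(\nu(t)^a,\theta(t)^a\bigr)=\int_{\R^2}\bigl(v(t)^a(x)-w(t)^a(y)\bigr)(x-y)\,d\gamma_t^a(x,y).
\]

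Next I would integrate this in $a$ against $\mu$. By \pref{lipent}, $t\mapsto W_2^2(\nu(t)^a,\theta(t)^a)$ is Lipschitz with a constant bounded uniformly in $a\in A$ (using $\bn(t),\bt(t)\in X_R$) and is bounded, so dominated convergence shows that $t\mapsto d^2(\bn(t),\bt(t))$ is Lipschitz and, for a.e.\ $t$,
\[
\frac{d}{dt}\,\tfrac12 d^2\bigl(\bn(t),\bt(t)\bigr)=\int_A\int_{\R^2}\bigl(v(t)^a(x)-w(t)^a(y)\bigr)(x-y)\,d\gamma_t^a(x,y)\,d\mu(a).
\]
For a.e.\ $t$ we have $-\bv(t)\in\partial J(\bn(t))$, $-\bw(t)\in\partial J(\bt(t))$ and $\bg_t:=(\gamma_t^a)_{a\in A}\in\Pi(\bn(t),\bt(t))$, so the monotonicity property \pref{monsousdiff} of $\partial J$, applied with $\bw_1=-\bv(t)$, $\bw_2=-\bw(t)$ and $\bg=\bg_t$, yields
\[
\int_A\int_{\R^2}\bigl(-(v(t)^a(x)-w(t)^a(y))\bigr)(x-y)\,d\gamma_t^a(x,y)\,d\mu(a)\ge 0,
\]
that is $\frac{d}{dt}d^2(\bn(t),\bt(t))\le 0$ for a.e.\ $t$. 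Since this function is absolutely continuous, it is nonincreasing, which is exactly the claimed inequality $d(\bn(t),\bt(t))\le d(\bn_0,\bt_0)$; taking $\bt_0=\bn_0$ forces $d(\bn(t),\bt(t))\equiv 0$, hence uniqueness.

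The step I expect to be the main obstacle is justifying the differentiation formula for $\tfrac12 W_2^2$ along the two curves: Definition \ref{defgf} only asks that each $\nu(t)^a$ solve \pref{s1} in the distributional sense with a bounded velocity, so one must first check this makes $t\mapsto\nu(t)^a$ an absolutely continuous curve in $(\PP_2(\R),W_2)$ whose velocity is governed by (the tangent part of) $v(t)^a$. In one space dimension the quantile isometry renders this transparent and, as a bonus, supplies the joint measurability in $(t,a)$ of the optimal plans needed to integrate the pointwise-in-$a$ identity; what then remains is the routine Fubini bookkeeping reconciling ``for a.e.\ $t$, $\mu$-a.e.\ $a$'' with ``for $\mu$-a.e.\ $a$, a.e.\ $t$'', together with the uniform Lipschitz control \pref{lipent} used to differentiate under the integral sign.
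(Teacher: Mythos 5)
Your proof follows the same overall structure as the paper's: identify the derivative of $t\mapsto\tfrac12 d^2(\bn(t),\bt(t))$ with an integral of $(v^a-w^a)(x-y)$ against a family of optimal plans, and then kill the sign with the monotonicity property \pref{monsousdiff} of $\partial J$. The difference lies in how the key differential inequality for $W_2^2$ is obtained. The paper simply invokes the general metric-space machinery (AGS, Thm.\ 8.4.7 and Lemma 4.3.4), which gives for each fixed $a$
\[
W_2^2(\nu_{t_2}^a,\theta_{t_2}^a)\le W_2^2(\nu_{t_1}^a,\theta_{t_1}^a)+\int_{t_1}^{t_2}\Big(\int_{\R^2}(v^a(s)(y)-w^a(s)(z))(y-z)\,\mbox{d}\gamma^a_s(y,z)\Big)\mbox{d}s,
\]
with an inequality (which is all one needs), whereas you re-derive it as an equality via the one-dimensional quantile isometry. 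Your route is genuinely more elementary and self-contained in $d=1$: the isometry linearizes the geometry, makes the a.e.\ differentiability of the Lipschitz curve $t\mapsto X_t^a-Y_t^a$ in $L^2(0,1)$ automatic, and as you note also supplies the joint $(t,a)$-measurability of the monotone optimal plans for free. What it does not give for free is the identification $\partial_t X_t^a(s)=v(t)^a(X_t^a(s))$ from a merely distributional solution of \pref{s1} with bounded Borel velocity; you correctly flag this as the main obstacle, and it does require an argument (in $1$d one can pass through the cumulative distribution functions, but one has to be a little careful where $\nu_t^a$ has atoms, or else project $v$ onto the tangent space as AGS does), so in the end you are leaning on essentially the same lemma the paper cites, just in its $1$d incarnation. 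Apart from that, your bookkeeping (normalizing by $h(a)$, uniform Lipschitz control from \pref{lipent}, Fubini to swap a.e.-in-$t$ and $\mu$-a.e.-in-$a$) is sound and matches the paper in spirit.
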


\begin{proof}
By definition there exists  velocity fields $\bv$ and $\bw$ such that for a.e. $t$, $\bv(t)=(v(t)^a)_{a\in A}\in -\partial J(\bn(t))$ and $\bw(t)=(w(t)^a)_{a\in A}\in -\partial J(\bt(t))$ and for $\mu$-almost every $a$, one has
\begin{equation}
\partial_t \nu^a+ \partial_x (\nu^a v^a)=\partial_t \theta^a+ \partial_x (\theta^a w^a)=0, \; \nu^a\vert_{t=0}=\nu_0^a, \; \theta^a\vert_{t=0}=\theta_0^a.
\end{equation}
Since $v^a$ and $w^a$ are bounded in $L^{\infty}(\nu^a)$ and $L^{\infty}(\theta^a)$ respectively, it follows from well-known arguments (see \cite{AGS}, in particular Theorem 8.4.7 and Lemma 4.3.4) that $t\mapsto W_2^2(\nu_t^a, \theta_t^a)$ is a Lipschitz function and that for any family of optimal plans $\gamma_s^a$ between $\nu_s^a$ and $\theta_s^a$  for $t_1\le t_2$ one has:
\[W_2^2(\nu_{t_2}^a, \theta_{t_2}^a)\le  W_2^2(\nu_{t_1}^a, \theta_{t_1}^a) + \int_{t_1}^{t_2} \Big( \int_{\R^2} (v^a(s)(y)-w^a(s)(z))(y-z) \mbox{d} \gamma^a_s(y,z)  \Big) \mbox{d} s.\]
Integrating the previous inequality  gives
\[d^2(\nu_{t_2}, \theta_{t_2})\le  d^2(\nu_{t_1}, \theta_{t_1}) + \int_{t_1}^{t_2} \Big( \int_{A\times \R^2} (v^a(s)(y)-w^a(s)(z))(y-z) \mbox{d} \gamma^a_s(y,z)  \mbox{d} \mu(a)\Big) \mbox{d} s.\]
But since $\bv(s)\in -\partial J(\bn(s))$ and $\bw(s)\in -\partial J(\bt(s))$ for a.e. $s$, the monotonicity relation \pref{monsousdiff} gives 
 \[\int_{A\times \R^2} (v^a(s)(y)-w^a(s)(z))(y-z) \mbox{d} \gamma^a_s(y,z)  \mbox{d} \mu(a)\le 0.\]
 We then obtain the desired contraction estimate.

\end{proof}

\subsection{Concluding remarks}

\subsection*{More general initial conditions}

We would like to mention here that in our main results of existence and uniqueness of a gradient flow for $J$, the assumption that $\rho_0$ is atomless plays no significant role. Actually, our results hold for any compactly supported initial condition $\bn_0$ (we did not investigate the extension to the case where this assumption is relaxed to a second moment bound, but this is probably doable). The assumption that $\rho_0$ is atomless was used only to select unambiguously the Cauchy datum $\nu_0^a$ in order to justify the reformulation of the initial kinetic equation by taking advantage of the first integral trick of section \ref{sec-firstint}. We suspect that in the case where $\rho_0$ is a discrete measure, there might be an interesting connection between gradient flows solutions and some solutions of the  initial ODE system \pref{ode} but a more precise investigation is left for the future.

\subsection*{Higher dimensions, more general functionals}

The motivation for the present work comes from kinetic models of granular media. Since the first integral trick of section \ref{sec-firstint} is very specific to the quadratic interaction kernel case in dimension one, all our subsequent analysis has been performed in dimension one only. However, it is obvious (but we are not aware of any practical examples in kinetic theory) that our arguments can be used also to study systems of continuity equations in $\R^d$ for infinitely many species (labeled by a parameter $a$) such as 
\[\partial_t \nu^a +\dive_x\Big (\nu^a (\nabla_x V(a,x) + \int_{A\times \R^d}  \nabla_x W(a,b, x,y) \mbox{d} \nu^b(y)  \mbox{d} \mu(b) )\Big)=0,\]
which (taking for instance $W$ symmetric $W(a,b,x,y)=W(b,a,y,x)$), can be seen as the gradient flow of
\[J(\bn):=\int_{A\times \R^d} V \mbox{d} (\bn\otimes \mu)+ \frac{1}{2} \int_{A\times \R^d} \int_{A\times \R^d}  W  \mbox{d} (\bn\otimes \mu) \otimes \mbox{d} (\bn\otimes \mu).\]




{\bf{Acknowledgements:}}  The authors are grateful to Yann Brenier, Reinhard Illner and Maxime Laborde for fruitful discussions about this work. M.A. acknowledges the support of NSERC through a Discovery Grant. G.C. gratefully acknowledges the hospitality of the Mathematics and Statistics Department at UVIC (Victoria, Canada), and the
support from the CNRS, from the ANR, through the project ISOTACE (ANR-12-
MONU-0013) and from INRIA through the \emph{action exploratoire} MOKAPLAN.

\end{document}